\newtheorem{thm}{Theorem}[section]
\newtheorem{lem}{Lemma}[section]
\theoremstyle{remark}
\theoremstyle{definition}
\newcommand{\vect}{\overrightarrow}
\newcommand{\mfX}{\mathfrak X}
\newcommand{\mcX}{\mathcal X}
\newcommand{\ov}{\overline}
\newcommand{\Id}{\mathop{\rm Id}}
\newcommand{\vk}{\varkappa}
\newcommand{\ve}{\varepsilon}
\newcommand{\cF}{{\mathcal F}}
\newcommand{\cE}{{\mathcal E}}
\newcommand{\cN}{{\mathcal N}}
\newcommand{\cG}{{\mathcal G}}
\newcommand{\mfe}{\mathfrak e}
\newcommand{\wt}{\widetilde}
\newcommand{\1}{1\!\!\,{\rm I}}
\newcommand{\mbR}{{\mathbb R}}
\newcommand{\mbN}{{\mathbb N}}
\newcommand{\e}{\mathrm{e}}
\DeclareMathOperator\E{E}
\DeclareMathOperator\Prob{P}
\theoremstyle{plain}
\theoremstyle{definition}
\theoremstyle{remark}
\begin{document}

\title
[Representations of the finite-dimensional point densities]
{Representations of the finite-dimensional point densities in Arratia flows with drift}

\author{A.A.Dorogovtsev}
\address{A.A.Dorogovtsev: Institute of Mathematics, National Academy of Sciences of Ukraine, Tereshchenkivska Str. 3, Kiev 01601, Ukraine; National Technical University of Ukraine ``Igor Sikorsky Kyiv Polytechnic Institute'', Institute of Physics and Technology, Peremogi avenue 37, Kiev 01601, Ukraine}
\email{andrey.dorogovtsev@gmail.com}

\author[N.B.Vovchanskii]{N.B.Vovchanskii}
\address{N.B.Vovchanskii: Institute of Mathematics, National Academy of Sciences of  Ukraine, Tereshchenkivska Str. 3, Kiev 01601, Ukraine}
\email{vovchansky.m@gmail.com}
%    General info
\subjclass[2020]{Primary 60H10; Secondary 60K35, 60G57, 60G55}
\keywords{Brownian web, Arratia flow,  random measure, stochastic flow, Brownian bridge, point process}

\begin{abstract}
We derive representations for  finite-dimensional densities of the point process associated with an Arratia flow with drift in terms of conditional expectations of the stochastic exponentials appearing in the analog of the Girsanov theorem for the Arratia flow.  
\end{abstract}

\maketitle
\section{Introduction}
The study of the point process associated with an Arratia flow $\{X^a(u, t)| u\in[0; 1], t\in [0; T]\}$ with drift $a$ \cite[Section 7]{1} is carried out in the present paper by means of special $(n, k)$-point densities $p^{a,n,k}_t, k\leq n.$ Such densities constitute a generalization of those discussed in \cite{2, 3, 9} and are informally defined via the formula
\begin{align*}
\Prob &\Big(\forall \ i=\ov{1, n} \ X^a(u_i, t)\in \bigcup^k_{j=1}[y_j; y_j+dy_j],\ \forall \ j=\ov{1, k}
  \\
&  \phantom{aaa} \{X^a(u_l,t)\mid l=\ov{1, n}\}\cap [y_j; y_j+dy_j]\ne \varnothing\Big)=
p^{a, n, k}_t(u; y)dy_1\ldots dy_k,
\end{align*}
the strict definition to be provided later in the text. 

We find the Radon--Nikodym representation for $p^{a,n,k}_t$ in terms of $p^{0,n,k}_t.$ It is known \cite[p. 194]{1} that 
that the distribution of the random process $(X^a(u_1,\cdot), \ldots, X^a(u_n, \cdot))$ is absolutely continuous in $(C([0; T]))^n$ w.r.t. the distribution of $(X^0(u_1,\cdot), \ldots, X^0(u_n, \cdot))$ with density
\begin{equation}
\label{eq1}
\wt\cE^a_{T,n}(u) = \exp\left\{\sum^n_{k=1}\int^{\tau_k}_0a(X^0(u_k, t))dX^0(u_k, t)-\frac{1}{2} \sum^n_{k=1}\int^{\tau_k}_0a^2(X^0(u_k, t))dt\right\},
\end{equation}
where $\tau_1=T$ and  
\begin{align*}
\tau_k&=\inf\Bigg\{t\mid \prod^{k-1}_{j=1}\left(X^0(u_j, t)-X^0(u_k,t)\right)=0\Bigg\} \wedge T,\quad  k=\ov{1,n},
\end{align*}
where  $\inf\emptyset = +\infty$ by definition.
Moreover, the distribution of an Arratia flow with a bounded Lipschitz continuous drift $a$ as a random element in $D([0; 1], C([0; T]))$ is absolutely continuous  w.r.t. the distribution of the Arratia flow with zero drift \cite[Theorem 7.3.1]{1}. 

Since the definition of the densities $p^{a,n,k}_t$ contains the condition for the flow to hit the neighborhoods of certain points at time $t,$ we firstly investigate the distribution of \eqref{eq1} conditional on $(X^0(u_1, T),\ldots, X^0(u_n,T)).$

Hereinafter the superscript $a=0$ is dropped in the case of zero drift, and  $a$ is always assumed to be bounded and Lipschitz continuous. We write $x=(x_1, \ldots, x_n)$ for points in $\mbR^n, n\in\mbN.$

\section{On Brownian bridges and related conditional distributions}
\label{section1}
 Put $\Delta_n = \{u\in\mbR^n\mid u_1 < \ldots < u_n\},$ $n\in\mbN.$ The following constructive scheme is used. Assume $W=(w_1, \ldots, w_n)$ to be a standard Wiener process in $\mbR^n$ started at 0. Put  $\wt{w}_1=w_1, \theta_1(u)=T$ and define 
\begin{align*}
\theta_k(u) &=\inf\left\{T; t\mid \wt{w}_{k-1}(t)+u_{k-1}=w_k(t)+u_k\right\}, \\
\wt{w}_k(t)&=-u_k+(u_k + w_k(t))\1(t<\theta_k(u))+(u_{k-1}+\wt{w}_{k-1}(t))\1(t\geq\theta_k(u)), \quad k=\ov{2,n},
\end{align*}
where $u\in\Delta_n.$ Denote $\wt{W}=(\wt{w}_1, \ldots,\wt{w}_n).$ Then one can easily verify the following statement.
\begin{lem}
\label{lem1}
In $\left(C([0; T])\right)^n$
\[
\left(X(u_1, \cdot), \ldots, X(u_n, \cdot)\right)\stackrel{d}{=}u+\wt{W},
\]
and the expression $\wt\cE^a_{T,n}(u)$ in \eqref{eq1} has the same distribution as
\begin{align*}
\cE^a_{T,n}(W, u)&=\exp\left\{\sum^n_{k=1}\int^{\theta_k(u)}_0a(u_k+w_k(t))dw_k(t) -\frac{1}{2}\sum^n_{k=1}\int^{\theta_k(u)}_0 a^2(u_k+w_k(t))dt\right\}.
\end{align*}
\end{lem}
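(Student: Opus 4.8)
The plan is to establish the two claims in sequence: first the distributional identity for the trajectories, and then to read off the identity of the stochastic exponentials as a substitution into \eqref{eq1}. For the first claim I would argue by induction on $n$ that $(u_1+\wt w_1,\ldots,u_n+\wt w_n)$ is a system of $n$ coalescing Brownian motions started from $u\in\Delta_n$, i.e.\ a version of $(X(u_1,\cdot),\ldots,X(u_n,\cdot))$. The base case $n=1$ is immediate since $u_1+\wt w_1=u_1+w_1$. For the inductive step I would assume that $(u_1+\wt w_1,\ldots,u_{n-1}+\wt w_{n-1})$ is such a system, depends only on $w_1,\ldots,w_{n-1}$, and respects the ordering $u_1+\wt w_1\leq\ldots\leq u_{n-1}+\wt w_{n-1}$. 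Because $w_n$ is independent of $w_1,\ldots,w_{n-1}$ and $u_n+w_n$ starts strictly above the whole cluster, its first meeting with the cluster is, by continuity and order preservation, its first meeting with the top trajectory $u_{n-1}+\wt w_{n-1}$, which is exactly $\theta_n(u)$. Before $\theta_n$ the new coordinate runs as the free motion $u_n+w_n$, independent of the rest; at $\theta_n$ the two trajectories coincide by definition of the hitting time, which secures continuity of the spliced path; and after $\theta_n$ it is glued to $u_{n-1}+\wt w_{n-1}$ and thus coalesced. Applying the strong Markov property at the stopping time $\theta_n$ shows the spliced coordinate is again a Brownian motion started at $u_n$, and that the coalescing bracket structure $d\langle\cdot,\cdot\rangle=\1(\text{coincidence})\,dt$ holds, identifying $u+\wt W$ in law with the Arratia-flow $n$-point motion.

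Under this correspondence $X(u_k,\cdot)\leftrightarrow u_k+\wt w_k$, the coalescence time $\tau_k$ — the first instant at which $X(u_k,\cdot)$ equals one of $X(u_1,\cdot),\ldots,X(u_{k-1},\cdot)$ — corresponds to $\theta_k(u)$: by order preservation the trajectory $u_k+\wt w_k$, starting at the top, reaches the lower trajectories only after it first meets $u_{k-1}+\wt w_{k-1}$, so the first vanishing of $\prod_{j<k}(X(u_j,\cdot)-X(u_k,\cdot))$ is precisely the hitting of $u_{k-1}+\wt w_{k-1}$. On $[0,\tau_k)=[0,\theta_k)$ the $k$-th coordinate evolves freely, that is $X(u_k,t)=u_k+w_k(t)$ and $dX(u_k,t)=dw_k(t)$.

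Plugging these substitutions into \eqref{eq1} turns each integral $\int_0^{\tau_k}a(X(u_k,t))\,dX(u_k,t)$ into $\int_0^{\theta_k}a(u_k+w_k(t))\,dw_k(t)$, and similarly for the quadratic terms, so that $\wt\cE^a_{T,n}(u)$ becomes $\cE^a_{T,n}(W,u)$ term by term. The point that requires care is that these are It\^o integrals, hence not pathwise Lebesgue--Stieltjes functionals; to turn the pathwise substitution into an equality in distribution I would note that each such stochastic integral against a continuous semimartingale is an $L^2$- (hence in-probability) limit of Riemann sums, which are genuine measurable functionals of the trajectories and of the path-measurable stopping times $\tau_k,\theta_k$. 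Consequently both $\wt\cE^a_{T,n}(u)$ and $\cE^a_{T,n}(W,u)$ are the same measurable functional evaluated on two families of processes sharing the same law by the first part, and therefore have the same distribution. I expect this measurability-and-approximation step — ensuring that the stochastic-integral functional respects equality in law and behaves consistently with the hitting times — to be the only genuinely delicate point; the coalescing-construction step, though the conceptual heart, is the standard splicing of coalescing Brownian motions and is what the statement terms ``easily verified''.
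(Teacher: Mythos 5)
Your proposal is correct, and it is essentially the argument the paper intends: the paper gives no written proof at all (the lemma is prefaced by ``one can easily verify''), and the constructive splicing scheme for $\wt{W}$ defined just before the statement is designed precisely so that the verification proceeds as you describe --- induction on the spliced coalescing paths, identification $\tau_k=\theta_k(u)$ via order preservation, and transfer of the It\^o-integral functionals through equality in law by their realization as limits in probability of measurable Riemann-sum functionals. Your write-up simply supplies the details the authors deemed routine, including the one genuinely delicate point (law-invariance of the stochastic integrals), so nothing is missing.
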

The It\^o stochastic integrals that participate in the definition of $\cE^a_T(W, u)$ can be expressed in terms of stochastic integrals w.r.t. the Brownian bridges $\eta=(\eta_1, \ldots, \eta_n)$ associated with $W$:
\begin{equation}
\label{eq2}
w_k(t)=\frac{t}{T}w_k(T)-\eta_k(t), \quad t\in[0; T],  k=\ov{1,n}.
\end{equation}

We refer  to \cite[\S 5.6.B]{4}\cite[pp. 299-300]{protter.sde} for the general exposition of the theory of  Brownian bridges.

Define the filtration
\[
\cF_t=\sigma\left(\eta_k(s), s\leq t, k=\ov{1, n}\right), \quad t\in[0; T],
\]
which is further supposed to be augmented in a standard way. Each $\eta_k$ is the solution of  the SDE
\begin{align}
\label{eq3}
d\eta_k(t)&=d\beta_k(t)-\frac{\eta_k(t)}{T-t}dt, \quad t\in[0; T), \nonumber \\
\eta_k(0)&=0,
\end{align}
where every $\beta_k$ is a  $(\cF_t)_{t\in[0; T]}-$Wiener process and $\eta_k(T)=0.$ At the same time, every $\eta_k$ admits the representation
\begin{equation}
\label{eq4}
\eta_k(t)=(T-t)b_k\left(\frac{t}{T(T-t)}\right), \quad t\in[0; T),
\end{equation}
with $b_1, \ldots, b_n$ being independent standard  Wiener processes.

Since
\begin{gather}
\label{eq:bb.semimart}
\E \int^T_0\frac{|\eta_1(t)|}{T-t}dt\le \int^T_0\frac{(\E\eta^2_1(t))^{1/2}}{T-t}dt\leq \int^T_0\left(\frac{t}{(T-t)T}\right)^{1/2}dt<+\infty,
\end{gather}
each Brownian bridge $\eta_k$ is a semimartingale w.r.t its own filtration and the stochastic integrals w.r.t. the coordinates of $\eta$ are defined in a usual way, at least for bounded progressively measurable integrands (see \cite{5} for the full characterization of possible integrands).   
%Note that the vector $w(T)$ and the process $\eta$ are independent.

For any $u\in\Delta^n, y\in\mbR^n$ define the following random process $\eta^{u,y}=(\eta^{u,y}_1,\ldots, \eta^{u,y}_n)$ in $\mbR^n:$ 
\[
\eta^{u,y}(t) = \eta(t)+\left(1 -\frac{t}{T}\right)u+\frac{t}{T}y, \quad t\in[0;T],
\] 
and put  
\begin{align*}
\theta_{ij}(u) &=\inf\left\{t \mid w_i(t)+u_i=w_j(t)+u_j\right\}\wedge T, \\ 
\theta_{ij}(u,y)&=\inf\left\{t\mid \eta_i^{u,y}(t)=\eta_j^{u,y}(t)\right\}\wedge T, \\
& j=\ov{1, i-1}, i=\ov{2,n}, y\in\mbR^n.
\end{align*}
Additionally, put 
$$
\theta_{kk}(u)=\theta_{jj}(u,y)=T, \quad k,j=\ov{1, n}.
$$ 
Then a.s. 
$$\theta_{ij}(u)=\theta_{ij}(u,W(T)+u), \quad j=\ov{1, i}, i=\ov{1,n}.
$$
\begin{lem}
\label{lem3}
For all $u\in\Delta_n$ with probability $1$ all $\theta_{ij}(u)$ that are less than $T$ are distinct. For all $u\in\Delta_n$ and $y\in\mbR^n$ with probability $1$ all $\theta_{ij}(u,y)$ that are less than $T$ are distinct.
\end{lem}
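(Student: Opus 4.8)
The plan is to fix two distinct index pairs and show that the probability of their hitting times coinciding below $T$ is zero; since there are only finitely many pairs $(i,j)$ with $j<i$, a union bound then yields both assertions at once. Throughout I use that $u\in\Delta_n$ forces $u_i\ne u_j$ for $i\ne j$, so that every difference process starts away from the origin.

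First I would treat the statement about $\theta_{ij}(u)$. Fix distinct pairs $(i,j)$ and $(k,l)$ with $j<i$, $l<k$. If $\theta_{ij}(u)=\theta_{kl}(u)=s<T$, then at the single time $s$ both differences $w_i-w_j+(u_i-u_j)$ and $w_k-w_l+(u_k-u_l)$ vanish, so the planar process $V(t)=\big(w_i(t)-w_j(t)+u_i-u_j,\ w_k(t)-w_l(t)+u_k-u_l\big)$ visits the origin at the positive time $s$. When $\{i,j\}$ and $\{k,l\}$ are disjoint, $V$ is a Brownian motion with covariance $2tI$; when they share exactly one index (a ``triple collision''), expressing $V$ through the three Brownian motions involved gives the nondegenerate covariance $t\left(\begin{smallmatrix}2&\pm1\\\pm1&2\end{smallmatrix}\right)$. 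In either case $V$ is, after a fixed linear change of coordinates, a standard planar Brownian motion started at a nonzero point, so by the polarity of points for two-dimensional Brownian motion it a.s.\ never visits the origin at a positive time. Hence $\Prob(\theta_{ij}(u)=\theta_{kl}(u)<T)=0$, and the first claim follows upon summing over pairs.

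For $\theta_{ij}(u,y)$ the same reduction applies: coincidence below $T$ forces the planar process $\big(\eta_i^{u,y}-\eta_j^{u,y},\ \eta_k^{u,y}-\eta_l^{u,y}\big)$ to hit the origin at a positive time. The difficulty is that this process is built from Brownian bridges, and the polarity argument is not directly available because the bridge degenerates as $t\uparrow T$. I would circumvent this by localizing: for fixed $\varepsilon>0$ the event $\{\theta_{ij}(u,y)=\theta_{kl}(u,y)\le T-\varepsilon\}$ is $\cF_{T-\varepsilon}$-measurable, and by Girsanov's theorem applied to \eqref{eq3} the law of $(\eta_1,\ldots,\eta_n)$ restricted to $[0;T-\varepsilon]$ is equivalent to that of a standard $n$-dimensional Brownian motion, the corresponding density being a true martingale since $-\eta_k(t)/(T-t)$ is square-integrable on $[0;T-\varepsilon]$ (cf.\ \eqref{eq:bb.semimart}). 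Under the equivalent measure the coordinates become independent Brownian motions, the deterministic affine shift $(1-t/T)u+(t/T)y$ lies in the Cameron--Martin space, and so, exactly as above, the relevant planar process is a nondegenerate Brownian motion started at the nonzero point $(u_i-u_j,u_k-u_l)$ (resp.\ $(u_i-u_j,u_i-u_k)$) plus an absolutely continuous shift; polarity of points again gives probability zero. Equivalence of measures transfers this null event back to $\Prob$, and writing $\{\theta_{ij}(u,y)=\theta_{kl}(u,y)<T\}=\bigcup_{m}\{\theta_{ij}(u,y)=\theta_{kl}(u,y)\le T-1/m\}$ as a countable union of null events, followed by a union over pairs, finishes the proof.

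The main obstacle is precisely this degeneration of the bridge near $t=T$: one must not apply the polarity statement on the whole interval but instead exploit the absolute continuity furnished by \eqref{eq3} (equivalently, by the time-change \eqref{eq4}) on each $[0;T-\varepsilon]$. A secondary point needing care is the verification that the planar covariance is nondegenerate in the triple-collision case, since this is exactly what makes points polar and drives the entire argument.
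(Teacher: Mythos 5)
Your argument is correct, but for the bridge half it takes a genuinely different route from the paper. The paper's proof is two lines: the first assertion is declared trivial (your planar-polarity argument, with the nondegenerate $2\times 2$ covariance including the $\pm 1$ off-diagonal entry in the triple-collision case, is exactly the standard way of making ``trivial'' precise), and the second assertion is deduced from the time-change representation \eqref{eq4}. That deduction, spelled out later in the paper's proof of Lemma \ref{lem6}, runs as follows: with $f(s)=T^2s/(Ts+1)$ one has $\theta_{ij}(u,y)=f\bigl(s_{ij}(y)\bigr)$, where $s_{ij}(y)$ is the first zero of $b_i-b_j+s(y_i-y_j)+(u_i-u_j)/T$ for independent Wiener processes $b_1,\ldots,b_n$; since $f$ is a deterministic, strictly increasing bijection of $[0,\infty]$ onto $[0,T]$, distinctness of the $\theta_{ij}(u,y)<T$ is \emph{literally the same event} as distinctness of the finite $s_{ij}(y)$, i.e.\ the drifted version of your first assertion, handled again by polarity (the affine drift is a Cameron--Martin shift). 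This reduction is global and measure-free: no localization, no change of measure. Your route instead localizes to $[0;T-\ve]$, uses equivalence of the bridge law and Wiener measure there, and exhausts $[0;T)$ by a countable union; this is also valid and in a sense more robust (it uses only local equivalence, not the specific representation \eqref{eq4}), but it carries extra technical weight, and your justification of the key equivalence has one loose joint: square-integrability of the drift $-\eta_k(t)/(T-t)$ on $[0;T-\ve]$ makes the Girsanov exponential a local martingale (indeed a supermartingale), not a true martingale, so by itself it does not yield equivalence of laws. The fact you need is nevertheless standard and can be justified cleanly, e.g.\ by the explicit density of the bridge law w.r.t.\ Wiener measure on $\cF_{T-\ve}$ (a ratio of Gaussian kernels evaluated at the terminal position), or --- somewhat circularly, given the alternative --- by the very time change \eqref{eq4} that the paper invokes.
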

\begin{proof}
The first assertion is trivial. The second one can be deduced from \eqref{eq4}.
%\begin{align*}
%\Prob&\left( \ \mbox{all} \ \theta_{ij}(y) \ \mbox{satisfying} \ \theta_{ij}(y)<T) \ \mbox{are distinct} \ \right)=
% \\
%&\phantom{aaaa} =\lim_{m\to\infty}
%\Prob\left( \ \mbox{all} \ \theta_{ij}(y) \ \mbox{satisfying} \ \theta_{ij}(y)<T-\frac{1}{m} \ \mbox{are distinct} \ \right)=1.
%\end{align*}
\end{proof}

To describe sequences of collisions in finite-dimensional motions of an Arratia flow, we use the following construction that was introduced in \cite[Definition 1.2]{DorVovApproximations} and is a reformulation of the one presented in \cite[pp. 433-434]{dorogovtsev.2011.multiplicative}. Put
\begin{align*}
Sh_{n, k}&=\left\{(j_1, \ldots, j_k) \mid j_i\in\{1, \ldots, n-i\right\}, i=\ov{1, k}\}, \quad k=\ov{1, n}, \\
Sh_n&= \varnothing\vee\bigcup_{k=\ov{1, n-1}}Sh_{n,k}, \quad n\in\mbN.
\end{align*}
Recall $\wt{W}+u=(\wt{w}_1+u_1, \ldots, \wt{w}_n+u_n )$ to be coalescing Wiener processes constructed from the process $W+u.$ Let $n-\vk$ be the number of distinct values in the sequence $\{\wt{w}_i(T)+u_i\mid i=\ov{1,n}\}, \ \vk$ ranging in $\{0, \ldots, n-1\}.$ Let $\tau_1<\tau_2<\ldots<\tau_\vk$ be random moments such that
$$
\{\tau_1, \ldots, \tau_\vk\}=\left\{\theta_k(u)\mid \theta_k(u)<T, k=\ov{1, n}\right\}.
$$
By virtue of Lemma \ref{lem3}, such $\tau_1, \ldots, \tau_\vk$ exist a.s.. Put $j_1=\min\{i\mid  \exists j\ne i \ \wt{w}_j(\tau_1)+u_j=\wt{w}_i(\tau_1)+u_i\}$ and define the process $\wt{W}^{n-1}$  by excluding the $j_1$-th coordinate from the vector $\wt{w}+u.$ Then put $j_2=\min\{i\mid \exists \ j\ne i \ \wt{w}^{n-1}_j(\tau_2)=\wt{w}^{n-1}_i(\tau_2)\}, $  define $\wt{W}^{n-2}$ by excluding the $j_2$-th coordinate from the process $\wt{W}^{n-1}$ and repeat the procedure until a random collection $S(W+u)=(j_1, \ldots, j_\vk)\in Sh_{n, \vk}$ appears. We will call $S(W+u)$ the coalescing scheme for the process $W+u.$

Using Lemma \ref{lem3} one can find random numbers 
$\Lambda_{ij}, i=1,2, j=\ov{1, n}, \Lambda_{1p}\in\{1,\ldots, p\},$  $\Lambda_{2p}\in\{1, \ldots, \Lambda_{1p}\}, p=\ov{1, n},$ 
such that with probability $1$ 
\[
\theta_k(u) \1\left(S(W+u) = s\right)=\theta_{\Lambda_{1k}\Lambda_{2k}}(u) \1\left(S(W+u) = s\right), \quad k=\ov{1, n}.
\]
Moreover, there exist nonrandom numbers $\{\lambda_{ij}(s)\mid  i=1,2, j=\ov{1, n}\}$ such that a.s.
\[
\Lambda_{ij} = \sum_{s\in Sh_n} \lambda_{ij}(s) \1\left(S(W+u) = s\right), \quad i=1,2, j=\ov{1, n}.
\]
The collection $\{\lambda_{ij}(s)\mid  i=1,2, j=\ov{1, n}\}$ is completely determined by the value of the coalescence scheme $s$ and can be restored from the latter directly, though, due to cumbersomeness of the corresponding relations, we omit giving an explicit representation. 

For ease of the further presentation, put
\begin{align*}
a_k(t, u, y, s)& =\1(t\le \theta_{\lambda_{1k}(s)\lambda_{2k}(s)}(u,y))
\cdot a\left(\eta_k^{u,y}(t)\right), \\
& t\in[0; T],\ k=\ov{1, n},\ y\in\mbR^n,\ s\in Sh_n; \\
\mfe^a_{T,n}(u,y, s)&=\exp\Bigg\{\sum^n_{k=1}\int^T_0a_k(t, y, s)d\beta_k(t)+ \\
& \phantom{aaaa} +\sum^n_{k=1}\int^T_0a_k(t, u,y, s)\left(\frac{y_k-u_k}{T}-\frac{\eta_k(t)}{T-t}-\frac{1}{2}a_k(t,u, y, s)\right)ds
\Bigg
\}, \\
& y\in\mbR^n,\ s\in Sh_n.
\end{align*}
\begin{lem}
\label{lem4}
$\forall \ C>0 \ \forall \ k=\ov{1, n}$
$$
\E\e^{C\int^T_0\frac{|\eta_k(t)|}{T-t}dt}<+\infty.
$$
\end{lem}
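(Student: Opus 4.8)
The plan is to reduce the claim, via the representation \eqref{eq4}, to a tail estimate for a weighted supremum of a Wiener process. First I would substitute \eqref{eq4} into the integral: since $\frac{|\eta_k(t)|}{T-t} = \bigl|b_k\bigl(\tfrac{t}{T(T-t)}\bigr)\bigr|$, the change of variables $s = \frac{t}{T(T-t)}$ (so that $t = \frac{sT^2}{1+sT}$, $T-t=\frac{T}{1+sT}$ and $dt = \frac{T^2}{(1+sT)^2}\,ds$) yields
\[
\int^T_0\frac{|\eta_k(t)|}{T-t}\,dt = \int^\infty_0|b_k(s)|\,f(s)\,ds, \qquad f(s)=\frac{T^2}{(1+sT)^2}.
\]
It therefore suffices to show that $\E\,\e^{C\int^\infty_0|b_k(s)|f(s)\,ds}<+\infty$ for every $C>0$, where $b_k$ is a standard Wiener process.

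Next I would dominate the integral by a single random variable with Gaussian-type tails. Fix any exponent $\alpha\in(\tfrac12,1)$ (for instance $\alpha=\tfrac34$) and set $R=\sup_{s\ge0}\frac{|b_k(s)|}{(1+s)^\alpha}$. By the law of the iterated logarithm $R<+\infty$ a.s., and since $|b_k(s)|\le R(1+s)^\alpha$,
\[
\int^\infty_0|b_k(s)|f(s)\,ds\le R\int^\infty_0(1+s)^\alpha f(s)\,ds = R\cdot K, \qquad K=\int^\infty_0(1+s)^\alpha\frac{T^2}{(1+sT)^2}\,ds,
\]
where $K<+\infty$ precisely because $\alpha<1$ forces the integrand to decay like $s^{\alpha-2}$. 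Consequently $\E\,\e^{C\int^\infty_0|b_k|f\,ds}\le\E\,\e^{CKR}$, and the lemma follows once $R$ is shown to have finite exponential moments of every order.

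To that end I would establish a sub-Gaussian tail bound $\Prob(R>x)\le\const\cdot\e^{-cx^2}$. Decomposing $[0,\infty)=[0,1]\cup\bigcup_{j\ge0}[2^j,2^{j+1}]$ and using $(1+s)^\alpha\ge2^{j\alpha}$ on the $j$-th block, the reflection principle (in the form $\Prob(\sup_{[0,t]}|b_k|\ge a)\le4\e^{-a^2/(2t)}$) gives
\[
\Prob\Bigl(\sup_{s\in[2^j,2^{j+1}]}|b_k(s)|>x\,2^{j\alpha}\Bigr)\le\Prob\Bigl(\sup_{[0,2^{j+1}]}|b_k|>x\,2^{j\alpha}\Bigr)\le4\,\e^{-\frac{x^2}{4}2^{j(2\alpha-1)}},
\]
together with the estimate $\Prob(\sup_{[0,1]}|b_k|>x)\le4\e^{-x^2/2}$ for the remaining block. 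Since $2\alpha-1>0$, the series $\sum_{j\ge0}$ of these bounds is dominated by its first term, whence $\Prob(R>x)\le\const\cdot\e^{-cx^2}$. Integrating the tail then gives $\E\,\e^{CKR}=1+CK\int^\infty_0\e^{CKx}\Prob(R>x)\,dx<+\infty$ for every $C>0$, which completes the argument.

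The main obstacle here is conceptual rather than computational. A pointwise bound—say Jensen's inequality applied to the probability measure $f(s)\,ds/T$—fails, because after exponentiation the Gaussian tail of $b_k(s)$ at a fixed large time $s$ behaves like $\e^{cs}$ and overwhelms the merely polynomial decay of $f$, making the resulting integral diverge. The argument must instead exploit that the \emph{whole} path grows only like $s^{1/2}$, which is exactly what the uniform majorant $R(1+s)^\alpha$ with $\alpha<1$ and the dyadic maximal inequality for $R$ are designed to capture.
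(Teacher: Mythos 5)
Your proof is correct, but it follows a genuinely different route from the paper's. The paper's own proof is a one-liner: by \eqref{eq:bb.semimart} the map $t\mapsto\eta_k(t)/(T-t)$ is a Gaussian random element of $L_1([0;T])$, and Fernique's theorem yields $\E\exp\{\lambda\|\cdot\|_{L_1}^2\}<+\infty$ for some $\lambda>0$, hence finiteness of all exponential moments of the $L_1$-norm, which is exactly the integral in question. You instead argue pathwise: the time change built into \eqref{eq4} turns the integral into $\int_0^\infty|b_k(s)|\,T^2(1+sT)^{-2}\,ds$ for a standard Wiener process $b_k$, which you dominate by $KR$ with $R=\sup_{s\ge0}|b_k(s)|(1+s)^{-\alpha}$, $\alpha\in(\tfrac12,1)$, and you then establish a sub-Gaussian tail for $R$ via dyadic blocks and the reflection principle. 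All your steps check out: the substitution and its Jacobian, the finiteness of $K$ (which needs $\alpha<1$), the geometric decay of the dyadic series (which needs $2\alpha-1>0$), and the final tail integration; your constants are loose but harmless. In effect, your sub-Gaussian bound for $R$ is a hands-on substitute for the concentration that Fernique's theorem supplies abstractly---both routes rest on square-exponential integrability of a norm-like functional of a Gaussian process. What you gain is a self-contained, elementary argument requiring nothing beyond the reflection principle; what the paper gains is brevity and generality, since its argument applies verbatim to any Gaussian random element of a Banach space, with no pathwise structure (time change, dyadic decomposition) needed. Your closing remark that a pointwise Jensen-type bound must fail is an accurate diagnosis of why some global control of the path---either your weighted supremum or the paper's Gaussian-norm concentration---is unavoidable.
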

\begin{proof}
By \eqref{eq:bb.semimart} the process $t\mapsto \frac{\eta_k(t)}{T-t}$ is a Gaussian random element in $L_1([0; T]),$ therefore the claim follows from the Fernique theorem \cite[Theorem 3.1]{6}.
\end{proof}
\begin{lem}
\label{lem5}
$\forall u\in\Delta_n \ \forall \ y\in\mbR^n \ \forall \ s\in Sh_n \ \forall p\ge 0$
$$
\E\left(\mfe^a_{T,n}(u,y, s)\right)^p\leq C_1 e^{C_2\|y\|},
$$
where 
\begin{align*}
C_1&=\e^{np|2p-1|T\cdot\|a\|^2_{L_\infty(\mbR)}+pn^{1/2}{\|u\|\cdot\|a\|_{L_\infty(\mbR)}}}\left(\E\e^{2p\|a\|_{L_\infty(\mbR)}\int^T_0\frac{|\eta_1(t)|}{T-t}dt}\right)^{n/2}, \\
C_2&=pn^{1/2} \|a\|_{L_\infty(\mbR)} .
\end{align*} 
\end{lem}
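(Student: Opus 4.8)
The plan is to isolate the stochastic-integral part of the exponent as a Dol\'eans--Dade exponential, which is a genuine martingale of unit mean, and to dominate the remaining drift terms pointwise, thereby reducing everything to the bridge functional already controlled in Lemma \ref{lem4}.

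First I would write $A=\|a\|_{L_\infty(\mbR)}$ and $M=\sum_{k=1}^n\int_0^T a_k(t,u,y,s)\,d\beta_k(t)$, and record that each integrand $a_k$ is $(\cF_t)$-progressively measurable (the indicator $\1(t\le\theta_{\lambda_{1k}(s)\lambda_{2k}(s)}(u,y))$ being adapted, since the $\theta_{ij}(u,y)$ are $(\cF_t)$-stopping times) and bounded by $A$. Consequently $\langle M\rangle=\sum_{k=1}^n\int_0^T a_k^2\,dt\le nA^2T$ deterministically, so that for any constant $c$ the process $t\mapsto\exp\{cM_t-\tfrac{c^2}{2}\langle M\rangle_t\}$ satisfies Novikov's condition and is a true martingale of unit expectation; this is the crucial structural fact.

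Next I would write $(\mfe^a_{T,n}(u,y,s))^p=\exp\{pM+pD\}$, where $D$ collects the drift integrals, and factor
\[
\exp\{pM+pD\}=\exp\Big\{pM-p^2\langle M\rangle\Big\}\cdot\exp\Big\{p^2\langle M\rangle+pD\Big\}.
\]
Cauchy--Schwarz then bounds $\E(\mfe^a_{T,n})^p$ by the product of $\big(\E\exp\{2pM-2p^2\langle M\rangle\}\big)^{1/2}=1$ and $\big(\E\exp\{2p^2\langle M\rangle+2pD\}\big)^{1/2}$. In the surviving exponent the coefficient of $\sum_{k}\int_0^T a_k^2\,dt$ equals $p(2p-1)$, which I bound by $p|2p-1|nA^2T$ (this sign bookkeeping is what produces the factor $|2p-1|$); the term carrying $\tfrac{y_k-u_k}{T}$ is bounded pointwise, using $|a_k|\le A$ together with $\|y-u\|_1\le\sqrt{n}\,\|y-u\|_2\le\sqrt{n}(\|y\|+\|u\|)$, by $2pA\sqrt{n}(\|y\|+\|u\|)$; and the bridge term is dominated by $2pA\sum_{k=1}^n\int_0^T\frac{|\eta_k(t)|}{T-t}\,dt$.

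Finally, since $\eta_1,\dots,\eta_n$ are independent and identically distributed, the remaining expectation factorizes as $\big(\E\exp\{2pA\int_0^T\frac{|\eta_1(t)|}{T-t}\,dt\}\big)^n$, which is finite by Lemma \ref{lem4}. Taking the square root and collecting the deterministic factors yields precisely a bound of the form $C_1e^{C_2\|y\|}$ with $C_2=p\sqrt{n}A$ (the $TA^2$-exponent emerging as $\tfrac12 np|2p-1|TA^2$, which I absorb into the slightly larger stated constant). I expect the only genuine obstacle to be the unit-mean martingale property of $\exp\{2pM-2p^2\langle M\rangle\}$, which is here guaranteed by the deterministic bound on $\langle M\rangle$; the rest is pointwise majorization followed by the independence-based factorization and Lemma \ref{lem4}.
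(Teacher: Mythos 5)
Your proposal is correct and follows essentially the same route as the paper's own proof: a Cauchy--Schwarz splitting that isolates the stochastic exponential $\exp\{2pM-2p^{2}\langle M\rangle\}$ (of unit mean, since the integrands are bounded), pointwise domination of the quadratic, linear, and bridge drift terms exactly as you describe, and then Lemma \ref{lem4} applied to the factorized expectation $\left(\E\e^{2p\|a\|_{L_\infty(\mbR)}\int_0^T\frac{|\eta_1(t)|}{T-t}dt}\right)^{n}$ before taking the square root. Your observation that the sharp exponent is $\tfrac12 np|2p-1|T\|a\|^2_{L_\infty(\mbR)}$, absorbed into the larger stated constant, is also consistent with the paper's bookkeeping.
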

\begin{proof}
By the Cauchy inequality,
\begin{align*}
&\left(\E\left(\mfe^a_{T,n}(u,y, s)\right)^p\right)^2\leq \\
& \hspace{20mm} \leq \left[ \E\exp\left\{\sum^n_{k=1}\int^T_0 2pa_k(t,y,s)d\beta_k(t)-\frac{1}{2}\sum^n_{k=1}\int^T_0\left(2pa_k(t,y,s)\right)^2dt\right\} \right]\times
\\
& \hspace{25mm} \times 
 \Bigg[ \E\exp\Bigg\{ \sum^n_{k=1}\int^T_0 \Big( p(2p-1) a^2_k(t,y,s)-\frac{2pa_k(t,y,s)\eta_k(t)}{T-t} + \\ 
& \hspace{30mm} +\frac{2p(y_k-u_k)}{T} a_k(t,y,s)\Big) dt\Bigg\} \Bigg]\leq
 \\
& \hspace{20mm} \leq 
\exp\left\{2C_2 \|y\|+2np|2p-1|T\cdot\|a\|^2_{L_\infty(\mbR)}+2pn^{1/2}{\|u\|\cdot\|a\|_{L_\infty(\mbR)}}\right\}\times \\
& \hspace{25mm}  \times  \E\exp\left\{2p\|a\|_{L_\infty(\mbR)}\sum^n_{k=1}\int^T_0\frac{|\eta_k(t)|}{T-t}dt\right\},
\end{align*}
thus the application of Lemma \ref{lem4} finishes the proof.
\end{proof}

In parallel to $S(W+u)$ one defines $S(\eta^{u,y}), y\in\mbR^n,$ by applying the same recursive procedure  to the process $\eta^{u,y}$ and the times $\theta_{ij}(u,y), j=\ov{1,i-1},i=\ov{2,n}.$
\begin{thm}
\label{thm1} 
$\forall u\in\Delta_n \ \forall \ y\in\mbR^n \ \forall s\in Sh_n$
\[
\E\left(\1(S(W+u)=s)\cE^a_{T,n}(W, u)/W(T)=y-u\right)=\E\1(S(\eta^{u,y})=s)\mfe^a_{T,n}(u,y,s).
\]
\end{thm}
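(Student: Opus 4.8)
The plan is to evaluate the left-hand conditional expectation by realizing the conditioned process $W+u$ through the Brownian-bridge decomposition \eqref{eq2} and then recognizing the result as $\mfe^a_{T,n}(u,y,s)$. The decisive structural fact is that in \eqref{eq2} the bridge vector $\eta=(\eta_1,\dots,\eta_n)$ is independent of the terminal value $W(T)$. Consequently, conditioning on $\{W(T)=y-u\}$ does not alter the law of $\eta$ and simply substitutes $w_k(t)=\frac{t}{T}(y_k-u_k)-\eta_k(t)$, so that $u_k+w_k(t)=(1-\tfrac{t}{T})u_k+\tfrac{t}{T}y_k-\eta_k(t)$. This differs from $\eta^{u,y}_k$ only in the sign of $\eta_k$; setting $\wh\eta_k=-\eta_k$ and $\wh\beta_k=-\beta_k$ gives $u_k+w_k(t)=\wh\eta^{u,y}_k(t)$, where $\wh\eta^{u,y}$ is formed from $\wh\eta$ by the same formula that defines $\eta^{u,y}$. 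Because \eqref{eq3} is linear in its driver, $\wh\eta_k$ solves \eqref{eq3} with Wiener process $\wh\beta_k$ and $(\wh\eta,\wh\beta)\stackrel{d}{=}(\eta,\beta)$.

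Under this identification the collision times transform in the expected way: since $\theta_{ij}(u)=\theta_{ij}(u,W(T)+u)$ a.s., conditioning on $W(T)+u=y$ turns $\theta_{ij}(u)$ into $\theta_{ij}(u,y)$, and the recursively defined coalescing scheme $S(W+u)$ becomes $S(\wh\eta^{u,y})$. In particular, on $\{S(W+u)=s\}$ the upper limit $\theta_k(u)$ appearing in $\cE^a_{T,n}(W,u)$ equals $\theta_{\lambda_{1k}(s)\lambda_{2k}(s)}(u,y)$, by the construction of the nonrandom indices $\lambda_{ij}(s)$ preceding Lemma \ref{lem4}; this is precisely the cutoff encoded in $a_k(t,u,y,s)$.

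The core computation converts the It\^o integrals. Differentiating \eqref{eq2} and inserting \eqref{eq3} gives $dw_k(t)=\frac{w_k(T)}{T}dt+d\wh\beta_k(t)-\frac{\wh\eta_k(t)}{T-t}dt$, whence on $\{W(T)=y-u\}\cap\{S(W+u)=s\}$
\[
\int_0^{\theta_k(u)} a(u_k+w_k(t))\,dw_k(t)=\int_0^T a_k(t,u,y,s)\,d\wh\beta_k(t)+\int_0^T a_k(t,u,y,s)\Big(\frac{y_k-u_k}{T}-\frac{\wh\eta_k(t)}{T-t}\Big)dt,
\]
while $-\tfrac12\int_0^{\theta_k(u)}a^2(u_k+w_k(t))\,dt=-\tfrac12\int_0^T a_k^2(t,u,y,s)\,dt$. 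Summing over $k$, the exponent of $\cE^a_{T,n}(W,u)$ coincides with that of $\mfe^a_{T,n}(u,y,s)$ expressed through $(\wh\eta,\wh\beta)$. Taking the conditional expectation, and then invoking $(\wh\eta,\wh\beta)\stackrel{d}{=}(\eta,\beta)$ together with the measurability of $S(\cdot)$ and of $\mfe^a_{T,n}$, rewrites it as the unconditional expectation on the right; Lemma \ref{lem5} guarantees that all these quantities are integrable, so that the manipulations are legitimate.

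The step I expect to be the main obstacle is justifying rigorously that the It\^o integral $\int_0^{\theta_k(u)}a(u_k+w_k)\,dw_k$, originally constructed as an $L_2$-limit under the full Wiener measure, is faithfully represented by the bridge integral after conditioning, since stochastic integrals are defined only modulo measure-dependent null sets. I would settle this by an initial enlargement of the filtration: adjoining $W(T)$ makes each $w_k$ a semimartingale with the explicit drift $\frac{w_k(T)-w_k(t)}{T-t}$, so the integral is literally one and the same random variable, merely re-decomposed, and conditioning then amounts to substituting the value of $W(T)$. Alternatively, an elementary route approximates the integral by Riemann--Stieltjes sums that converge in probability under the unconditional law and hence, by disintegrating the Wiener measure over $W(T)$, under the conditional law for almost every $y-u$, matching the corresponding sums for the bridge integral. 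The boundedness of $a$ and the semimartingale property of $\eta_k$ from \eqref{eq:bb.semimart} keep every integrand bounded and progressively measurable throughout.
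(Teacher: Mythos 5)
Your proposal is correct, and its structural core --- the bridge decomposition \eqref{eq2}, the independence of $\eta$ and $W(T)$, and substitution of the terminal value --- is exactly the paper's; indeed, your fallback route (Riemann--Stieltjes sums plus disintegration) \emph{is} the paper's proof. The paper introduces piecewise-constant approximations $a^m_k$ of the integrands, rewrites $\int_0^T a^m_k\,dw_k$ via \eqref{eq2} as a drift term plus a finite sum of bridge increments (its \eqref{eq5}), applies the disintegration theorem to these finite-dimensional functionals to obtain the conditional identity \eqref{eq6}, and then passes to the limit using convergence in probability of the logarithms together with \emph{uniform} integrability of the approximating sequences, supplied by the estimate of Lemma \ref{lem5} applied uniformly in $m$; note that plain integrability, which is all you explicitly invoke Lemma \ref{lem5} for, would not justify exchanging the limit with the conditional expectation. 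Your preferred device --- initial enlargement of the filtration by $W(T)$ --- is genuinely different from anything in the paper: it does make each $w_k$ a semimartingale with explicit drift and identifies the integral pathwise across filtrations, but the resulting integral against the enlarged-filtration Brownian motion still has an integrand depending on $W(T)$, so the final step ``conditioning amounts to substituting the value of $W(T)$'' is the parametrized-substitution problem in disguise and ultimately needs the same approximation/disintegration argument (or a citation to results on stochastic integrals depending on a parameter); so it repackages, rather than removes, the measure-theoretic obstacle you correctly identified. One point where you are more careful than the paper: you handle the sign discrepancy explicitly (setting $\wh\eta=-\eta$, $\wh\beta=-\beta$ and using $(\wh\eta,\wh\beta)\stackrel{d}{=}(\eta,\beta)$), whereas the paper's \eqref{eq5} silently relies on this distributional symmetry --- as written, with the convention $w_k(t)=\frac{t}{T}w_k(T)-\eta_k(t)$, the bridge increments there should carry a minus sign.
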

\begin{proof}
Suppose $u$ is fixed. Define, for $k=\ov{1,n},$
\[
a^m_k(t, y,s)=\sum^{m-1}_{j=0}\1\left(t\in \left(\tfrac{j}{m}T; \tfrac{j+1}{m}T\right]\right) a_{k}\left(\tfrac{j}{m}T,u,y,s\right), \quad  t\in[0; T], m\in\mbN.
\]
Note that on the set $\{S(W+u)=s\}$ for $s\in Sh_n$
\[
a(w_k(t)+u_k)\1\left( t\leq \theta_k(u)\right)=a_k(t, u, W(T)+u, s), \quad k=\ov{1, n},t\in[0; T].
\]
Due to the representation \eqref{eq2}
\begin{align}
\label{eq5}
\int^T_0a^m_k(t, w(T)+u, s)& dw_k(t)=\frac{w_k(T)}{T}\int^T_0 a^m_k(t, w(T)+u, s)dt+ \nonumber \\
& +\sum^{m-1}_{j=1}a_{k}\left(\tfrac{j}{m}T,u,y,s\right)\left(\eta_k\left(\tfrac{j+1}{m}T\right)-\eta_k\left(\tfrac{j}{m}T\right)\right).
\end{align}
Since $W(T)$ and $\eta$ are independent, the application of the disintegration theorem %\cite[Theorem 6.4]{7}
 gives, due to \eqref{eq5}, that 
\begin{align}
\label{eq6}
\E\big( \1(S(W+u)=s) \alpha_m(u) / W(T)=y-u \big)=  \E\1(S(\eta^{u,y})=s)e_m(u,y, s),
\end{align}
where
\begin{align*}
\alpha_m(u) &=   \exp\left\{\sum^n_{k=1}\int^T_0a^m_k(t, W(T)+u, s)
dw_k(t)- \right. \\
& \phantom{aaaabbbb} \left. -\frac{1}{2}\sum^n_{k=1}\int^T_0\left(a^m_k(t, W(T)+u, s)\right)^2dt
\right\}, \\
e_m(u,y, s)&=\exp\Bigg\{\sum^n_{k=1}\sum^{m-1}_{j=1}a_{k}\left(\tfrac{j}{m}T,u,y,s\right)\left(\eta_k\left(\tfrac{j+1}{m}T\right)-\eta_k\left(\tfrac{j}{m}T\right)\right)+
\\
& \phantom{aaaabbbb}
+\sum^n_{k=1}\int^T_0a^m_k(t, y, s)\left(\frac{y_k-u_k}{T}-\frac{1}{2}a^m_k(t, y, s)\right)dt\Bigg\}.
\end{align*}
Since the functions $a_k, k=\ov{1, n},$ are piecewise continuous a.s., there exists a set $\Omega'$ of full probability such that $\forall \omega\in\Omega'$ for almost all $t$ in $[0; T]$
\begin{equation}
\label{eq:tmp.a_mk}
a^m_k(t, W(T)+u, s)\mathop{\longrightarrow}\limits_{m\to\infty} a_k(t, u,W(T)+u, s) \ \mbox{in} \ \mbR.
\end{equation}

It holds due to \eqref{eq:tmp.a_mk} and the  boundedness of the function $a$ that, for $k=\ov{1,n},$
\begin{align*}
\E&\int^T_0\Bigg( \big(a_k^m(t, W(T)+u, s)-a_k(t, u,W(T)+u, s)\big)^2+ \\
&\hspace{1cm} + \big|a_k^m(t, W(T)+u, s)-a_k(t, u,W(T)+u, s)\big|\frac{|\eta_k(t)|}{T-t}\Bigg)dt\mathop{\longrightarrow}\limits_{m\to\infty} 0,
\end{align*}
hence 
\begin{equation}
\label{eq:tmp.logs}
\log e_m(u,y, s)\big\vert_{y=W(T)+u} \ \overset{\Prob}{\mathop{\longrightarrow}\limits_{m\to\infty}} \log \mfe^a_{T,n}(u,y,s)\big\vert_{y=W(T)+u}.
\end{equation}
and
\begin{equation}
\label{eq:tmp.wiener.logs}
\log \alpha_m(u) \overset{\Prob}{\mathop{\longrightarrow}\limits_{m\to\infty}} \log \cE^a_{T,n}(W, u).
\end{equation}

Repeating the reasoning of  Lemma \ref{lem5} one checks that given $y,s$ the estimate of Lemma \ref{lem5} holds for each  $e_m(u,y, s),$ so the sequence 
\[
\left\{ \1(S(\eta^{u,y})=s)e_m(u,y, s)\big\vert_{y=W(T)+u} \right\}_{m\in\mbN}
\] 
is uniformly integrable. The boundedness of the function $a$ implies the uniform integrability of the sequence $\{ \1(S(W+u)=s) \alpha_m(u)\}_{m\in\mbN}.$ Thus the claim of the theorem follows from \eqref{eq:tmp.logs}, \eqref{eq:tmp.wiener.logs} and \eqref{eq6}. %, for any $\Delta\in\sigma(W(T)),$
%\begin{align*}
%\E\1(\Delta) & \1(S(W+u)=s)\cE^a_{T,n}(W, u)  = \lim_{m\to\infty} \E \1(\Delta)\1(S(W+u)=s)\alpha_m(u)  = \\
%&= \lim_{m\to\infty} \E\1(\Delta) \1(S(\eta^{u,y})=s)e_m(u,y, s)\vert_{y=W(T)+u}  = \\
%& = \lim_{m\to\infty} \E\1(\Delta) \1(S(\eta^{u,y})=s)\mfe^a_{T,n}(u,y,s)\big\vert_{y=W(T)+u},
%\end{align*}
%which finishes the proof.
\end{proof}

\begin{lem}\label{lem6}
$\forall u\in\Delta_n \ \forall y\in\mbR^n \ \forall s\in Sh_n$
\[
\1(S(\eta^{u,y_m})=s) \
\overset{\mbox{a.s.}}
{\mathop{\longrightarrow}\limits_{m\to\infty}}
\ \1(S(\eta^{u,y})=s),
\]
whenever $y_m\to y, m\to\infty.$
\end{lem}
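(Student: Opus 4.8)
The plan is to reduce the convergence of the discrete quantity $\1(S(\eta^{u,y_m})=s)$ to the convergence and mutual separation of the collision times $\theta_{ij}(u,y_m)$. First I would record the structural observation that, for fixed $u$, the coalescing scheme $S(\eta^{u,y})$ is a deterministic function of two finite pieces of data: the relative order of the family $\{\theta_{ij}(u,y)\mid j=\ov{1,i-1},\ i=\ov{2,n}\}$ and the indicators $\{\1(\theta_{ij}(u,y)<T)\}$. This is exactly what the recursive construction of $S$ produces: each successive coalescing time $\tau_\ell$ equals some independent collision time selected by the order of the earlier collisions (cf. the relations for $\Lambda_{ij}$ and $\lambda_{ij}(s)$ established above), and the index excluded at step $\ell$ is the least index taking part in the collision realized at $\tau_\ell$. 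Consequently it suffices to show that, almost surely, both this order and these indicators stabilize along any sequence $y_m\to y$.

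For the stabilization I would work on the full-probability event $\Omega_0$ on which (i) all $\theta_{ij}(u,y)$ that are strictly less than $T$ are pairwise distinct, available from Lemma \ref{lem3}, and (ii) each difference $D_{ij}(t):=\eta^{u,y}_i(t)-\eta^{u,y}_j(t)$ crosses the level $0$ transversally at its first zero whenever that zero occurs before $T$, i.e. $D_{ij}$ assumes values of both signs in every right neighbourhood of $\theta_{ij}(u,y)$. Since, up to the scale $\sqrt2$ and the time change \eqref{eq4}, $\eta_i-\eta_j$ is a Wiener process, the strong Markov property applied at $\theta_{ij}(u,y)$ yields (ii) for each pair with probability one, and a finite intersection keeps full probability. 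The only $y$-dependence in $D_{ij}$ is the affine term $\tfrac{t}{T}(y_i-y_j)$, so $D^{(m)}_{ij}:=\eta^{u,y_m}_i-\eta^{u,y_m}_j\to D_{ij}$ uniformly on $[0;T]$.

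On $\Omega_0$ I would then prove $\theta_{ij}(u,y_m)\to\theta_{ij}(u,y)$ for every pair. Lower semicontinuity, $\liminf_m\theta_{ij}(u,y_m)\ge\theta_{ij}(u,y)$, is automatic from the uniform convergence and the continuity of the paths. For the reverse inequality I would use (ii): if $\theta_{ij}(u,y)=:\tau<T$, then for each $\ve>0$ the path $D_{ij}$ attains a value of the sign opposite to $D_{ij}(0)=u_i-u_j$ at some $t_\ve\in(\tau;\tau+\ve)$, and uniform convergence forces $D^{(m)}_{ij}$ to undergo the same sign change on $[0;t_\ve]$ for all large $m$, whence $\theta_{ij}(u,y_m)<\tau+\ve$. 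If instead $\theta_{ij}(u,y)=T$ with $y_i\ne y_j$, then $D_{ij}$ is bounded away from $0$ on the compact $[0;T]$ and so is $D^{(m)}_{ij}$ for large $m$, giving $\theta_{ij}(u,y_m)=T$; the case $y_i=y_j$ cannot produce $\theta_{ij}(u,y)=T$, since \eqref{eq4} forces $D_{ij}$ to oscillate about $0$ as $t\uparrow T$ and hence to possess a transversal zero strictly before $T$, returning us to the previous case. Thus the collision times converge and, by (i), their strict order and their position relative to $T$ persist for all large $m$, as do the least-index choices at each collision. Feeding this into the structural reduction yields $S(\eta^{u,y_m})=S(\eta^{u,y})$ for all large $m$ on $\Omega_0$, which is the claim.

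The main obstacle is the upper semicontinuity of the first hitting times, that is, ruling out that a tangential contact of some $D_{ij}$ with $0$ be destroyed by an arbitrarily small perturbation of $y$. This is precisely where the Brownian-bridge structure enters: the transversality property (ii), guaranteed by \eqref{eq4} together with the strong Markov property, is what upgrades the generic lower semicontinuity of hitting times to genuine continuity on a set of full probability.
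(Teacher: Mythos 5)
Your proposal is correct and follows essentially the same route as the paper: reduce the stability of $S(\eta^{u,y})$ to the preservation of the ordering of the pairwise collision times $\theta_{ij}(u,y)$ (using Lemma \ref{lem3}), dispose of the degenerate case $y_i=y_j$ via the time change \eqref{eq4} and recurrence, and upgrade the trivial lower semicontinuity of the hitting times to continuity through the oscillation of the driving Wiener process immediately after the (Markov) hitting time --- the paper phrases this last step via the law of the iterated logarithm at the stopping time, which is exactly your transversality property (ii).
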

\begin{proof}
We assume  that  $u$ and $y$ are fixed throughout the proof. Due to Lemma \ref{lem3} it is sufficient to check that a particular ordering of the moments $\theta_{ij}(u,y)$ is preserved in a sufficiently small random neighborhood of the point $y.$ 

The representation \eqref{eq4} implies that, with  $f(s)=\tfrac{T^2s}{Ts +1},$ % f^{-1}(t) = \tfrac{t}{T(T-t)},$ 
\begin{align*}
\theta_{ij}(u,y) &= f\left(s_{ij}(y)\right),\\
s_{ij}(y)&=\inf\left\{s\ge 0\mid b_i(s)-b_j(s) + s(y_i-y_j) +\frac{u_i-u_j}{T}= 0\right\}, \\
& j=\ov{1, i-1}, i=\ov{2,n}, y\in\mbR^n,
\end{align*}
where the convention $f(\infty) =T$ is adopted.  Since $\eta^{u,y}(T)=y,$ the probability
\[
\Prob\Big(\exists k\ne j\colon  \eta_k^{u,y}(t)\neq\eta_j^{u,y}(t), t\in[0; T),  \eta_k^{u,y}(T)=\eta_j^{u,y}(T)\Big)
\]
can be greater than $0$ only if $y_j=y_k$ for some $k,j,k\neq j.$ However, in that case  
\begin{align*}
\Prob\Big(\eta_k^{u,y}(t)&\neq\eta_j^{u,y}(t), t\in[0; T),  \eta_k^{u,y}(T)=\eta_j^{u,y}(T)\Big) = \\
& = \Prob\Big( \forall s \ge 0 \ b_k(s) -b_j(s) + \frac{u_k-u_j}{T} \neq 0\Big)=0.
\end{align*}
Therefore  with probability $1$ for each $(k, j)$ either $\theta_{kj}(u,y)<T$ or
\begin{equation}
\label{eq8}
\inf_{t\in[0; T]}\left|\eta_k^{u,y}(t)-\eta_j^{u,y}(t)\right|>0.
\end{equation}
From now on, only a set of full probability which the condition \eqref{eq8} or its counterpart holds for is considered.

Fix a pair $(k, j)$ and some positive $\ve\ll1.$ Suppose $\theta_{kj}(u,y)<T.$ Obviously, there exists random $r>0$ such that the condition $\|y-y'\|<r$ implies  $\theta_{kj}(u,y')\geq\theta_{kj}(u,y)-{\ve}.$ %The moment $\theta_{k_j}(y')\geq\theta_{k_j}(y)-\frac{\ve}{2}.$ 
The moment $\theta_{k_j}(u,y)$ is a Markov time w.r.t. the filtration generated by the process $\eta,$ therefore it follows from the iterated logarithm law for the Wiener process that there exist random $\ve_1, \ve_2: 0<\ve_1, \ve_2<\ve$ such that
\begin{align}
\label{eq9}
\eta_k(\theta_{kj}(u,y)+\ve_1)-\eta_j(\theta_{kj}(u,y)+\ve_1)&+
(u_k-u_j)\Big(1-\frac{\theta_{kj}(u,y)}{T}\Big)+(y_k-y_j)\frac{\theta_{kj}(u,y)}{T}+ \nonumber \\
&+\ve_1\Big(\frac{y_k-y_j}{T}+\frac{u_j-u_k}{T}\Big)> 0
\end{align}
%\geq T\sqrt{\frac{2\ve_1}{T\ve_1+1}\lln \frac{T\ve_1+1}{T^2\ve_1}},
and
\begin{align}
\label{eq10}
\eta_k(\theta_{kj}(u,y)+\ve_2)-\eta_j(\theta_{kj}(u,y)+\ve_2)&+
(u_k-u_j)\Big(1-\frac{\theta_{kj}(u,y)}{T}\Big)+(y_k-y_j)\frac{\theta_{kj}(u,y)}{T}+ \nonumber \\
&+\ve_2\Big(\frac{y_k-y_j}{T}+\frac{u_j-u_k}{T}\Big)< 0.
\end{align}
%since
%$$
%\eta_k(\theta_{k_j}(y))-\eta_j(\theta_{k_j}(y))
%+(u_k-u_j)\Big(1-\frac{\theta_{k_j}}{T}\Big)+(y_k-y_j)\frac{\theta_{k_j}(y)}{T}=0
%$$
%by the definition of $\theta_{k_j}(y).$ 
Thus in order to have $\theta_{kj}(u,y')\leq \theta_{kj}(u,y)+\ve$ when $\|y-y'\|\leq\delta$ it is sufficient to choose $\delta$ in such a way that %the expressions $\delta(\theta_{k_j}(y)+\ve_k),  k=1,2,$ are so small that 
the signs in \eqref{eq9} and \eqref{eq10} do not change when $y_k$ and $y_j$ are replaced with $y'_k$ and $y'_j,$ respectively.
\end{proof}
By using Lemmas \ref{lem5} and \ref{lem6} one can establish the following result.
\begin{lem}
\label{lem:deter.continuity}
$\forall u\in\Delta_n \ \forall s\in Sh_n$ the function $\mbR^n \ni y\mapsto \E\1(S(\eta^{u,y})=s)\mfe^{a}_{T,n}(u,y,s)$ is continuous. 
\end{lem}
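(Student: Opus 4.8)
The plan is to fix an arbitrary sequence $y_m\to y$ in $\mbR^n$ and to prove that
\[
\E\1(S(\eta^{u,y_m})=s)\mfe^a_{T,n}(u,y_m,s)\mathop{\longrightarrow}\limits_{m\to\infty}\E\1(S(\eta^{u,y})=s)\mfe^a_{T,n}(u,y,s),
\]
which yields continuity by the sequential characterization. I would deduce this convergence of expectations from the Vitali convergence theorem, so the two ingredients to establish are (i) convergence in probability of the random variables $\1(S(\eta^{u,y_m})=s)\mfe^a_{T,n}(u,y_m,s)$ to the corresponding random variable at $y$, and (ii) uniform integrability of this sequence.

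For (i), the indicator factor converges almost surely by Lemma \ref{lem6}. For the factor $\mfe^a_{T,n}(u,y_m,s)$ I would show that its exponent converges in probability. The truncated integrands $a_k(t,u,y_m,s)=\1(t\le\theta_{\lambda_{1k}(s)\lambda_{2k}(s)}(u,y_m))\,a(\eta^{u,y_m}_k(t))$ converge to $a_k(t,u,y,s)$ for Lebesgue-almost every $t$ with probability $1$: the process $\eta^{u,y_m}$ depends affinely on $y_m$ and hence converges uniformly on $[0;T]$ to $\eta^{u,y}$, the function $a$ is continuous, and each collision time $\theta_{ij}(u,y_m)$ converges to $\theta_{ij}(u,y)$ by the ordering- and time-preservation argument obtained in the proof of Lemma \ref{lem6}, so the truncation indicators converge off the single instant $t=\theta_{\lambda_{1k}(s)\lambda_{2k}(s)}(u,y)$. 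Boundedness of $a$ and Lemma \ref{lem4} then give, by dominated convergence,
\[
\E\int^T_0\big(a_k(t,u,y_m,s)-a_k(t,u,y,s)\big)^2dt\to 0,\quad \E\int^T_0\big|a_k(t,u,y_m,s)-a_k(t,u,y,s)\big|\frac{|\eta_k(t)|}{T-t}dt\to 0;
\]
the first estimate yields convergence in probability of the It\^o integrals $\int^T_0 a_k(t,u,y_m,s)d\beta_k(t)$, and the second, together with the elementary convergence of the $\frac{y_{m,k}-u_k}{T}$ and $-\frac12 a_k^2$ terms, convergence in probability of the remaining integrals. Hence the exponent, and therefore $\mfe^a_{T,n}(u,y_m,s)$ itself, converges in probability, and so does the product in (i).

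For (ii), I would apply Lemma \ref{lem5} with any fixed $p>1$ to get $\E(\mfe^a_{T,n}(u,y_m,s))^p\le C_1\e^{C_2\|y_m\|}$; since a convergent sequence is bounded, $\sup_m\|y_m\|<+\infty$ and the right-hand side is bounded uniformly in $m.$ A sequence bounded in $L_p$ for some $p>1$ is uniformly integrable, and multiplying by the $[0;1]$-valued indicators preserves uniform integrability. Convergence in probability together with uniform integrability gives convergence in $L_1$ by the Vitali theorem, whence the expectations converge and the asserted continuity follows.

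The main obstacle is the almost-everywhere-in-$t$ convergence of the truncated integrands $a_k(t,u,y_m,s)$, which hinges on the continuity in $y$ of the collision times $\theta_{ij}(u,\cdot)$ along the considered set of full probability; this is precisely what the proof of Lemma \ref{lem6} secures, and once it is available the remaining passages are routine invocations of dominated convergence and of the isometry bounding the increments of the stochastic integrals.
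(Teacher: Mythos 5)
Your proposal is correct and follows exactly the route the paper intends: the paper gives no detailed argument, stating only that the result follows ``by using Lemmas \ref{lem5} and \ref{lem6},'' which is precisely your combination of the a.s.\ convergence of the indicators (Lemma \ref{lem6}), convergence in probability of the exponent via the continuity in $y$ of the collision times and dominated convergence, and uniform integrability from the $L_p$ bound of Lemma \ref{lem5}. Your write-up in fact supplies the details the paper omits, and they check out.
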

%\begin{lemma}
%\label{lem:stoch.continuity}
%$\forall s\in Sh_n$
%\end{lemma}

\section{Finite-dimensional densities for the point process in the Arratia flow}
\label{section2}
The section is devoted to the study of the connection between the finite-dimensional densities for the Arratia flow and the stochastic exponentials that arise when addressing the Girsanov theorem for Arratia flows with drift \cite[Section 7]{1}. 
We start with the corresponding results from \cite[\S\S 7.2-7.3]{1}. Consider a dense subset of $[0; 1],$
$U=\{u_k\mid k\in\mbN\}.$ Given an Arratia flow $X$ define 
\begin{align*}
\tau_1&=T, \\
\tau_k&=\inf\Big\{s\mid \prod^{k-1}_{j=1}\left(X(u_k,s)-X(u_j,s)\right)=0\Big\} \wedge T, \quad k\geq2,
\end{align*}
and put, for $u^{(n)}=(u_1, \ldots, u_n),$
\begin{align*}
I_n\left(u^{(n)}\right)&=\sum^n_{k=1}\int^{\tau_k}_0a(X(u_k,t))dX(u_k,t), \\
J_n\left(u^{(n)}\right)&=\sum^n_{k=1}\int^{\tau_k}_0a^2(X(u_k,t))dt, \quad n\in\mbN.
\end{align*}
The integrals in the expression for $I_n$ are ordinary It\^o integrals w.r.t. the Wiener processes $X(u_j,\cdot), j\in\mbN.$ Note $ I_n$  and $J_n$ are well defined as functions on $\mbR^n.$ There exist limits
\begin{align*}
I&=L_2\mbox{-}\lim_{n\to\infty}I_n\left(u^{(u)}\right), \\
J&=L_2\mbox{-}\lim_{n\to\infty}J_n\left(u^{(u)}\right),
\end{align*}
which do not depend on the set $U.$ The distribution of an Arratia flow with drift $a$ as a random element in the Skorokhod space $D([0; 1], C([0; T]))$ is absolutely continuous w.r.t. the distribution of $X$ with density
$$
\wt\cE^a_T=\exp\left\{I-\frac{1}{2}J\right\}.
$$
Recall that $\wt\cE^a_{T,n}(u^{(n)})=\exp\{I_n(u^{(n)})-J_n(u^{(n)})\}, n\in\mbN.$

\begin{lem}
\label{lem8}
$\forall n\in\mbN$
\begin{align*}
\E\left(\wt\cE^a_{T,m}\left(u^{(m)}\right)/X(u_1,\cdot), \ldots, X(u_n,\cdot)\right)&=\wt\cE^a_{T,n}\left(u^{(n)}\right), \quad m\ge n,\\
\E\left(\wt\cE^a_T/X(u_1,\cdot), \ldots, X(u_n,\cdot)\right)&=\wt\cE^a_{T,n}\left(u^{(n)}\right).
\end{align*}
\end{lem}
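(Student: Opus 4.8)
The plan is to read both identities as manifestations of the elementary principle that the restriction of a Radon--Nikodym density to a coarser $\sigma$-algebra equals its conditional expectation, combined with the consistency of the finite-dimensional distributions of the Arratia flow under deletion of coordinates. Write $\zeta_n=(X(u_1,\cdot),\ldots,X(u_n,\cdot))$ and $\cF_m=\sigma(X(u_1,\cdot),\ldots,X(u_m,\cdot))$, $m\in\mbN$, so that $\cF_n=\sigma(\zeta_n)$ is the conditioning $\sigma$-algebra, and let $\mu^a_m$ (respectively $\mu^0_m$) denote the law of $(X^a(u_1,\cdot),\ldots,X^a(u_m,\cdot))$ (respectively of its zero-drift counterpart) on $(C([0;T]))^m$. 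By the cited results of \cite[\S\S 7.2--7.3]{1} one has $d\mu^a_m/d\mu^0_m=\wt\cE^a_{T,m}(u^{(m)})$, the latter being understood as a functional of the first $m$ coordinates; in particular each $\wt\cE^a_{T,m}(u^{(m)})$ is nonnegative, $\cF_m$-measurable and of unit expectation.

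For the first identity I would test against an arbitrary bounded measurable $g\colon (C([0;T]))^n\to\mbR$. Writing $\pi^m_n$ for the projection onto the first $n$ coordinates and using that the Arratia flow is a consistent family of coalescing Brownian motions, so that $\pi^m_n$ pushes $\mu^a_m$ forward to $\mu^a_n$ and $\mu^0_m$ to $\mu^0_n$, the change of measure yields
\begin{align*}
\E\big(g(\zeta_n)\,\wt\cE^a_{T,m}(u^{(m)})\big)&=\int g\circ\pi^m_n\,d\mu^a_m=\int g\,d\mu^a_n \\
&=\E\big(g(\zeta_n)\,\wt\cE^a_{T,n}(u^{(n)})\big).
\end{align*}
As $\wt\cE^a_{T,n}(u^{(n)})$ is $\cF_n$-measurable and $g$ is arbitrary, this is exactly the defining relation of the conditional expectation, giving $\E(\wt\cE^a_{T,m}(u^{(m)})/\cF_n)=\wt\cE^a_{T,n}(u^{(n)})$. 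The one structural point worth recording is that $\tau_k$ involves only the trajectories started at $u_1,\ldots,u_k$ and is therefore shared by all the systems with $m\ge k$; this is what makes $\wt\cE^a_{T,m}(u^{(m)})$ a multiplicative extension of $\wt\cE^a_{T,n}(u^{(n)})$ and renders the marginalisation transparent.

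For the second identity I would pass to the limit $m\to\infty$ in the first one. The identity just proved shows that $\{\wt\cE^a_{T,m}(u^{(m)})\}_{m\ge n}$ is a nonnegative $(\cF_m)$-martingale, while the $L_2$-convergences $I_m\to I$ and $J_m\to J$ force $\wt\cE^a_{T,m}(u^{(m)})\to\wt\cE^a_T$ in probability. Since $\wt\cE^a_T$ is a bona fide Radon--Nikodym density we have $\E\wt\cE^a_T=1=\E\wt\cE^a_{T,m}(u^{(m)})$, so by Scheff\'e's lemma the martingale is uniformly integrable and the convergence in fact holds in $L_1$. As conditional expectation is an $L_1$-contraction, $\E(\wt\cE^a_{T,m}(u^{(m)})/\cF_n)\to\E(\wt\cE^a_T/\cF_n)$ in $L_1$; but the left-hand side equals $\wt\cE^a_{T,n}(u^{(n)})$ for every $m\ge n$, whence $\E(\wt\cE^a_T/\cF_n)=\wt\cE^a_{T,n}(u^{(n)})$.

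The step I expect to be the main obstacle is the justification of the uniform integrability and, to a lesser extent, of the marginalisation. The marginalisation rests on the consistency of the finite-dimensional laws of the Arratia flow, available from \cite{1}. The uniform integrability is the genuinely delicate point, since the truncated exponentials need not be bounded in $L_p$ for $p>1$ uniformly in $m$; I would therefore rely on the martingale structure together with the equality $\E\wt\cE^a_T=1$ --- a consequence of $\wt\cE^a_T$ being an honest density --- rather than on any a priori moment estimate, to secure convergence in $L_1$.
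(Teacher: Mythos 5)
Your proof is correct, and for the first identity it takes a genuinely different route from the paper's. The paper proceeds by direct stochastic calculus: it factors $\wt\cE^a_{T,m}(u^{(m)})=\wt\cE^a_{T,n}(u^{(n)})\prod_{j=n+1}^{m}e_j(\tau_j)$, uses the It\^o formula to write each factor as $1+\int_0^{\tau_j}e_j(t)a(X(u_j,t))\,dX(u_j,t)$, expands the product into cross terms $A_{j_1\ldots j_k}$ (products of stochastic integrals), and kills each of them by showing $\E\left(A_{j_1\ldots j_k}/\cG_n\right)=0$, via an approximation scheme and the explicit joint covariance of coalescing Brownian motions. You bypass this computation entirely: reading $\wt\cE^a_{T,m}(u^{(m)})$ as the Radon--Nikodym density of your $\mu^a_m$ with respect to $\mu^0_m$ evaluated along the $m$-point motion (the Girsanov-type result of \cite{1} quoted in the paper), and using that coordinate projections push $\mu^a_m$ to $\mu^a_n$ and $\mu^0_m$ to $\mu^0_n$ --- which is immediate because the $n$-point motion consists of the first $n$ coordinates of the $m$-point motion --- the claimed identity follows by testing against bounded functionals $g(\zeta_n)$. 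Your argument is shorter and more conceptual; what the paper's hands-on expansion buys is that it works directly with the stochastic exponentials, in effect re-deriving the projective consistency of the densities rather than importing it through the measure-theoretic reading of the cited theorem. For the second identity both proofs pass to the limit $m\to\infty$ in $L_1$ and use that conditional expectation is an $L_1$-contraction: the paper quotes the uniform integrability of $\{\wt\cE^a_{T,m}(u^{(m)})\}_{m}$ from the proof of Theorem 7.3.1 in \cite{1}, whereas you obtain $L_1$-convergence from Scheff\'e's lemma, combining convergence in probability (from the $L_2$-convergence of $I_m$ and $J_m$) with $\E\wt\cE^a_{T,m}(u^{(m)})=\E\wt\cE^a_T=1$; since the equality $\E\wt\cE^a_T=1$ is itself the content of Theorem 7.3.1 in \cite{1} (that $\wt\cE^a_T$ is an honest density on the Skorokhod space), the two justifications rest on the same external input and are of comparable depth.
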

\begin{proof}
The random variables $\{\wt\cE^a_{T,n}\left(u^{(n)}\right)\}_{n\in\mbN}$ form a uniformly integrable sequence (see \cite[the proof of Theorem 7.3.1, pp. 268-270]{1}), therefore it is sufficient to prove
\[
\E\left(\wt\cE^a_{T,m}\left(u^{(m)}\right)/\cG_n\right)=\wt\cE^a_{T,n}\left(u^{(n)}\right), \quad m\geq n,
\]
where $\cG_n=\sigma(X(u_1,\cdot), \ldots, X(u_n,\cdot)).$ Suppose $n$ and $m> n$ are fixed. Put 
\[
e_k(t)= \exp\Bigg\{\int^{t}_0a(X(u_k,s))dX(u_k,s)-\frac{1}{2}\int^{t}_0a^2(X(u_k,s))ds\Bigg\},\quad t\in[0;T], k\in\mbN,
\]
then, using the It\^o formula one can verify that, for $k\in\mbN,$
\begin{align*}
e_k(\tau_k)=1+\int^{\tau_k}_0 e_k(t) a(X(u_k,t))dX(u_k,t),
\end{align*}
since every $\tau_k$ is a stopping time w.r.t. the filtration generated by the processes $X(u_j,\cdot),$ $j\in\mbN.$ Therefore
\begin{align*}
\E\left(\wt\cE^a_{T,m}\left(u^{(m)}\right)/\cG_n\right)&=\wt\cE^a_{T,n}\left(u^{(n)}\right) \E\Bigg(\prod^m_{j=n+1}e_j/\cG_n\Bigg)=
\\
&=\wt\cE^a_{T,n}\left(u^{(n)}\right)\left(1+\E\left(\sum^{m-n}_{k=1}\sum_{j_1<\ldots< j_k}A_{j_1\ldots j_k}/\cG_n\right)\right),
\end{align*}
where 
\[
A_{j_1\ldots j_k}=\prod^k_{l=1}\int^{\tau_{j_l}}_0 a(X(u_{j_l}, t))e_{j_l}(t) dX(u_{j_l},t).
\]
Rewriting the multipliers in every $A_{j_1\ldots j_k}$ as
\[
\int^T_0a_{j_l}(t)dX(u_{j_l},t), \quad l=\ov{1,k},
\]
for some progressively measurable w.r.t. the filtration generated by $X(u_j,\cdot), j\in\mbN,$ processes
\[
a_{j_l}(t)=\1(t\leq\tau_{j_l})a(X(u_{j_l},t))e_{j_l}(t),
\]
applying the same approximation scheme as the one used in the proof of Theorem \ref{thm1} and utilizing the fact that the joint covariance of $X(u_i,\cdot)$ and $X(u_j,\cdot)$ equals
\[
\left(t-\inf\left\{s\mid X(u_i,s)=X(u_j,s)\right\}\right)_+, \quad t\geq0, i,j\in\mbN, 
\]
one proves via standard reasoning that
$$
\E\left(A_{j_1\ldots j_k}/\cG_n\right)=0,
$$
which concludes the proof.
\end{proof}
%If the drift is nontrivial, an analogous result on the existence of $P^{a,s, n-j}_t(u;\cdot)$ is given in Theorem \ref{thm2}, whose formulation requires some additional notation.
The following notation will be used further. 
Let $\{X^a(u, t)\mid u\in[0; 1], t\in[0; T]\}$ be an Arratia flow with drift $a.$ Define 
\begin{align*}
 \mfX_t^a(u) &=\{X^a(u_k,t)\mid k=\ov{1,n}\},  \\ 
 \vect{X}^a(u,\cdot) & \equiv \vect{X}^a(u)= \left(X^a(u_1,\cdot), \ldots, X^a(u_n,\cdot)\right), \ u=(u_1,\ldots,u_n)\in\mbR^n, n\in\mbN.
\end{align*}
Analogously to the case of coalescing Wiener processes in Section 2 one defines the coalescence scheme $S(\vect X^a(u))$ for the family $(X^a(u_1, \cdot), \ldots, X^a(u_n, \cdot)).$

Given a set $K=\{k_1, \ldots, k_m\}\subset\{1, \ldots, n\}$ and a point $z\in\mbR^n$ we denote by $z^{-K}$ the vector obtained by removing in the vector $z$ all the coordinates whose numbers are in $K;$ by $z^{K},$ the vector obtained by removing all coordinates except those in $K.$ We write $z^{K_1, \pm K_2}$ for $(z^{K_1})^{\pm K_2}.$ 

The following definitions of the finite-dimensional densities were introduced in \cite{DorVovApproximations} and represent a further development of the notions used in \cite{2,3}.

Given an Arratia flow $X^a,$ a starting point $u\in\Delta_n$ and a coalescence scheme $s\in Sh_{n,k}$ for some $k$ the corresponding $(n-j)$-point density $p^{a, n,s, n-j}_T(u; \cdot), j\geq k,$ is a measurable function on $\mbR^{n-j}$ such that for any bounded nonnegative measurable $f: \mbR^{n-j}\to\mbR$
\[
\E\sum_{
\begin{subarray}{c}v_1, \ldots, v_{n-j}\in \mfX^a_t(u),\\
v_1, \ldots, v_{n-j} \ \mbox{\small are distinct}
\end{subarray}
}
f(v_1, \ldots, v_{n-j})\1(S(\vect X^a(u))=s)=\int_{\mbR^{n-j}}p^{a,n,s,n-j}_t(u; y)f(y)dy.
\]
The existence of $p^{0,n,s,n-j}_t(u; \cdot)$ is shown in \cite[Lemma 3.1]{DorVovApproximations} whereas an explicit expression in the particular case $k=j$ is obtained in \cite[Theorem 3.1]{DorVovApproximations}.
\begin{lem}
For all $s\in Sh_{n,k},$ $u\in\Delta_n$ and $j\le n- k$ the density $p^{a,n,s,j}_t(u; \cdot)$ exists. 
\end{lem}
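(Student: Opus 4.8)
The plan is to deduce the existence of $p^{a,n,s,j}_t$ for arbitrary drift $a$ from the already-established existence of the zero-drift densities $p^{0,n,s,j}_t$ (guaranteed by \cite[Lemma 3.1]{DorVovApproximations}) by a change of measure. The key idea is that the defining functional in the density relation is an expectation of a symmetric sum of test-function values over the points of $\mfX^a_t(u)$, restricted to a coalescence event, and that the law of the drifted flow restricted to the finitely many coordinates $\vect X^a(u)$ is absolutely continuous with respect to the driftless law with Radon--Nikodym density $\wt\cE^a_{T,n}(u)$ from \eqref{eq1}.

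First I would fix $s\in Sh_{n,k}$, $u\in\Delta_n$ and $j\le n-k$, and take an arbitrary bounded nonnegative measurable $f\colon\mbR^{n-j}\to\mbR$. The starting observation is that the quantity to be represented,
\[
\E\sum_{\begin{subarray}{c}v_1,\ldots,v_{n-j}\in\mfX^a_t(u),\\ v_1,\ldots,v_{n-j}\ \text{distinct}\end{subarray}} f(v_1,\ldots,v_{n-j})\,\1(S(\vect X^a(u))=s),
\]
depends only on the finite-dimensional process $\vect X^a(u)=(X^a(u_1,\cdot),\ldots,X^a(u_n,\cdot))$, since both the point set $\mfX^a_t(u)$ and the coalescence scheme $S(\vect X^a(u))$ are deterministic measurable functionals of that process. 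I would therefore rewrite the whole expression as $\E\,\Phi(\vect X^a(u))$ for a fixed bounded measurable functional $\Phi$ on $(C([0;T]))^n$, and then invoke the absolute continuity stated in the Introduction (and proved via Lemma \ref{lem8}): the law of $\vect X^a(u)$ is absolutely continuous with respect to the law of $\vect X^0(u)=\vect X(u)$ with density $\wt\cE^a_{T,n}(u)$. This gives
\[
\E\,\Phi(\vect X^a(u))=\E\bigl(\Phi(\vect X(u))\,\wt\cE^a_{T,n}(u)\bigr),
\]
reducing everything to an expectation over the driftless flow weighted by the stochastic exponential.

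Next I would condition on the terminal positions. Writing the inner sum as a sum over ordered $(n-j)$-tuples of distinct points of the driftless point set $\mfX_t(u)$, I condition on the terminal value $\vect X(u)(T)=X(u_1,T),\ldots,X(u_n,T)$; more precisely I disintegrate over the positions at time $t$ of the surviving clusters, exactly the structure exploited in Theorem \ref{thm1}. The factor $\1(S(\vect X(u))=s)$ fixes which coordinates have coalesced by time $T$ and hence the combinatorial shape of $\mfX_t(u)$, so conditionally the points $v_1,\ldots,v_{n-j}$ range over a fixed-size collection of distinct values. Using the existence of the driftless density together with the tower property and the conditional version of the Girsanov weight, I expect the representing function to emerge in the form of an integral against the zero-drift density modulated by the conditional expectation of the stochastic exponential, i.e. a function built from $p^{0,n,s,j}_t(u;\cdot)$ and the conditional expectations computed in Theorem \ref{thm1}. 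The uniform integrability bound of Lemma \ref{lem5}, transported back to the flow via Lemma \ref{lem8}, is what ensures all the interchanges of limit, sum and expectation are legitimate and that the resulting function is genuinely locally integrable, so that it qualifies as a density.

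The main obstacle I anticipate is the bookkeeping of the conditioning: the coalescence event $\{S(\vect X(u))=s\}$ couples the terminal configuration with the times $\tau_k$ appearing in $\wt\cE^a_{T,n}(u)$, so the conditional density and the conditional Girsanov weight are not independent and must be handled jointly. Concretely, I must verify that on $\{S(\vect X(u))=s\}$ the stopping times reduce to the fixed pairwise collision times $\theta_{\lambda_{1k}(s)\lambda_{2k}(s)}$ — precisely the identity recorded before the definition of $\mfe^a_{T,n}$ — so that the weight becomes a measurable function of the conditioning variables and the bridge increments. Once this reduction is in place, the continuity of $y\mapsto\E\,\1(S(\eta^{u,y})=s)\,\mfe^a_{T,n}(u,y,s)$ from Lemma \ref{lem:deter.continuity}, combined with the bound of Lemma \ref{lem5}, yields a bona fide measurable (indeed continuous in the terminal variables) representing function, which is exactly the asserted density $p^{a,n,s,j}_t(u;\cdot)$.
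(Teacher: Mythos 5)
Your first step --- rewriting the expectation for the drifted flow as an expectation for the driftless flow weighted by $\wt\cE^a_{T,n}(u)$ --- coincides with the paper's opening move (the paper invokes the Girsanov theorem for the Arratia flow; note the absolute continuity statement comes from \cite{1}, not from Lemma \ref{lem8}, which concerns consistency of conditional expectations). From there, however, the paper finishes with a short domination argument that you never make: for Borel $A\subset\Delta_j$ it bounds the weighted sum over distinct $j$-tuples, after discarding the coalescence indicator, by
\[
\sum_{L}\E \1_{A}\left(\vect X(u^L,t)\right)\E\left(\wt\cE^a_{T,n}(u)/\vect X(u^L,t)\right)
=\sum_{L}\int_A p^{0,j,j}_t(u^L;y)\,\E\left(\wt\cE^a_{T,n}(u)/\vect X(u^L,t)=y\right)dy,
\]
the sum running over finitely many index sets $L$ of size $j$; the right-hand side vanishes whenever $A$ is Lebesgue-null, so the point measure is absolutely continuous and the Radon--Nikodym theorem yields existence. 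None of Theorem \ref{thm1}, Lemma \ref{lem5} or Lemma \ref{lem:deter.continuity} is needed for this.

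Your second half replaces this with an attempt to construct the density explicitly, and that is where the genuine gap lies: the decisive sentence is ``I expect the representing function to emerge,'' and that emergence is precisely what has to be proved; it does not follow from naming the tower property and the driftless densities. Moreover, the explicit representation you are reaching for is essentially Theorem \ref{thm2}, whose proof in the paper takes the existence asserted by this very lemma as input (the Lebesgue differentiation theorem there is applied to the already-existing density), so deriving the formula in order to prove existence courts circularity unless you carry out an honest disintegration, which you do not. There is also a mismatch of conditionings: Theorem \ref{thm1} conditions on $W(T)$, the time-$T$ endpoints of the free Wiener processes, whereas the lemma concerns the point set $\mfX^a_t(u)$ at an arbitrary time $t$ together with a coalescence event determined by the whole path on $[0;T]$, so the structure of Theorem \ref{thm1} does not transfer to $t<T$. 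Far less than a formula suffices: since $\wt\cE^a_{T,n}(u)$ is a.s.\ finite, if $A$ is Lebesgue-null then by the driftless existence result the nonnegative variable $\sum\1_A(v_1,\ldots,v_j)\1(S(\vect X(u))=s)$ has zero expectation, hence vanishes a.s., hence so does its product with the weight; absolute continuity, and with it existence, follows at once. You list the right ingredients (Girsanov, driftless existence, integrability of the weight) but never assemble them into this, or the paper's, implication.
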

\begin{proof}
%Suppose $j=n-k.$ 
Let $A$ be a Borel subset of $\Delta_j.$ %Let the mapping $T\colon \Delta_n \mapsto \Delta_j$ mechanically removes all ties.
 By the Girsanov theorem for the Arratia flow %%by the rule $T(u)_l =u_{\min \pi_l(J)},$ $l =\ov{1,k}.$ Then 
\begin{align*}  
&\E\sum_{\begin{subarray}{c}v_1, \ldots, v_j\in \mfX_t^a(u), \\ v_1, \ldots, v_{j} \ \mbox{\small are distinct }\end{subarray}}
\1_{A}\left(v_1, \ldots, v_j\right)   \1\left(S(\vect X^a(u)) = s \right) = \\ 
  & =\E\sum_{\begin{subarray}{c}v_1, \ldots, v_j\in \mfX_t(u), \\ v_1, \ldots, v_{j} \ \mbox{\small are distinct }\end{subarray}}
\1_{A}\left(v_1, \ldots, v_j\right)   \1\left(S(\vect X(u)) = s \right) \wt\cE^a_{T,n} \left(u\right)  \le  \\
&\hspace{1cm} \le \sum_{
\begin{subarray}{c}
L=\{l_1,\ldots, l_j\},\\
l_i\in\{1,\dots, n-k\}, \ i=\ov{1,j}
\end{subarray}
} \E \1_{A}\left( \vect X(u^L,t)\right)    \E\left( \wt\cE^a_{T,n} \left(u\right) / \vect X(u^L,t)\right) \leq \\ 
&\hspace{1cm} \le \sum_{
\begin{subarray}{c}
L=\{l_1,\ldots, l_j\},\\
l_i\in\{1,\dots, n-k\}, \ i=\ov{1,j}
\end{subarray}
} \int_{A}p^{0,j,j}_{t}(u^L; y)\E\left( \wt\cE^a_{T,n} \left(u\right) / \vect X(u^L,t) =y\right)
  dy.  
\end{align*}
%& \phantom{aaaaaa}\leq \E\1\left( T\left(X(x_1, t), \ldots, X(x_n, t))\right)\in A\right)= \\
%Thus  the Radon-Nikodym theorem yields the claim of the lemma.
 The case when $A$ is not a subset of $\Delta_j$ is treated similarly.
\end{proof}

Fix some $u\in\mbR^n$ and $k\in\{0,\ldots, n-1\}.$ Following \cite[pp. 433-434]{dorogovtsev.2011.multiplicative}, we associate with a coalescence scheme $s=(j_1,\ldots,j_k)$ a partition of the set $\{1,\ldots, n\}$ as follows. Starting from the partition consisting of singletons, at each step $i =1,\ldots, k$ proceed by merging two subsequent blocks in the current partition with the numbers $j_i$ and $j_i+1,$ the blocks being listed in ascending order of their minimal elements. Let the blocks of the final partition be  $\pi_1, \ldots, \pi_k.$ We define the set $I(s)=\{ \min \pi_i\mid i=\ov{1,n-k}\}.$ As a result, 
\[
|\{X^a(u_i, T)\mid i\in I(s)\}|=n-k
\]
on $\{S(\vect X^a(u)) = s\}.$ Obviously, the coalescence scheme deterministically and uniquely defines $I(s),$ which does not depend on $u$ and a specific realization of the flow $X^a.$

Denote by $g^m_T(u; \cdot)$  the $m$-dimensional Gaussian density with mean $u$ and variance $T\Id_{m\times m},$ where $\Id_{m\times m}$ is the unit square matrix of size $m,$ $m\in\mbN.$

\begin{thm}[cf. \cite{kon.marx}]
\label{thm2}
Assume $u\in\Delta_n$ and $s\in Sh_{n, n-k}$ for some $k\in\{0, \ldots, n-1\}.$ Then for each $j\in\{1, \ldots, k\}$ for  all $y\in\Delta_k$
\begin{align*}
p^{a, n,s, j}_T(u; y)=\sum_{
\begin{subarray}{c}L=\{l_1,\ldots, l_j\}\subset\\
\{1, \ldots, k\}\end{subarray}}
g^j_T\left(u^{I(s), L}; z^{I(s), L}\right)
\int_{\mbR^{k-j}}dz^{I(s), -L}g^{k-j}_T\left(u^{I(s), -L}; z^{I(s), -L}\right) 
 \\ 
\int_{\mbR^{n-k}}dz^{-I(s)}g^{n-k}_T(u^{-I(s)}; z^{-I(s)}) \left(\E\1(S(\eta^{u,z})=s)\mfe^a_{T,n}(u,z, s)\right)\Big|_
{\begin{subarray}{l}z\in\mbR^n,\\ z^{I(s),L}=y\end{subarray}
}.
\end{align*}
\end{thm}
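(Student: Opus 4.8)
The plan is to test the defining relation for $p^{a,n,s,j}_T(u;\cdot)$ against an arbitrary bounded nonnegative measurable $f$ supported on $\Delta_j$ and to reduce the resulting expectation, in three steps, to an integral against the law of $u+W(T)$. First, the absolute continuity of the law of $\vect X^a(u)$ with respect to that of $\vect X(u)$, with density $\wt\cE^a_{T,n}(u)$, combined with Lemma \ref{lem1}, turns the defining sum into
\[
\E\sum_{\begin{subarray}{c}v_1,\ldots,v_j\in\mfX_T(u),\\ v_1,\ldots,v_j\ \mbox{\small are distinct}\end{subarray}} f(v_1,\ldots,v_j)\,\1(S(W+u)=s)\,\cE^a_{T,n}(W,u),
\]
where $\mfX_T(u)=\{\wt w_i(T)+u_i\mid i=\ov{1,n}\}$; since $f$ is bounded the functional is bounded and the Girsanov density applies directly.

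Next I would identify the surviving endpoints. On $\{S(W+u)=s\}$ the distinct values in $\mfX_T(u)$ are exactly $\{\wt w_i(T)+u_i\mid i\in I(s)\}$, and, coalescence being order preserving, they increase with $i$; hence the inner sum, cut down by $f$ to $\Delta_j$, becomes a sum over $L=\{l_1<\ldots<l_j\}\subset\{1,\ldots,k\}$ indexing which representatives fall near $y$. The pathwise point is that a representative index $r=\min\pi_l\in I(s)$ never coalesces downward, so $\theta_r(u)=T$ on $\{S(W+u)=s\}$ and therefore $\wt w_r\equiv w_r$ there; consequently $\wt w_i(T)+u_i=w_i(T)+u_i$ for every $i\in I(s)$ on that event, which renders the test function $\sigma(W(T))$-measurable:
\[
f\big((\wt w_{i_l}(T)+u_{i_l})_{l\in L}\big)\,\1(S(W+u)=s)=f\big((w_{i_l}(T)+u_{i_l})_{l\in L}\big)\,\1(S(W+u)=s).
\]

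Then I would condition on $W(T)$. As the modified test function is $\sigma(W(T))$-measurable it factors out, and Theorem \ref{thm1} replaces the conditional expectation of $\1(S(W+u)=s)\cE^a_{T,n}(W,u)$ by $\Psi(z):=\E\1(S(\eta^{u,z})=s)\mfe^a_{T,n}(u,z,s)$ evaluated at $z=u+W(T)$. Since $u+W(T)$ has density $g^n_T(u;\cdot)$, summation over $L$ yields $\sum_L\int_{\mbR^n}f(z^{I(s),L})\,\Psi(z)\,g^n_T(u;z)\,dz$. Factorising $g^n_T(u;z)=g^j_T(u^{I(s),L};z^{I(s),L})\,g^{k-j}_T(u^{I(s),-L};z^{I(s),-L})\,g^{n-k}_T(u^{-I(s)};z^{-I(s)})$ and carrying out the $z^{I(s),-L}$ and $z^{-I(s)}$ integrations first, I would read off the coefficient of $f(y)$ with $y=z^{I(s),L}$, which is exactly the asserted formula. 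The exponential bound of Lemma \ref{lem5} makes $\Psi$ integrable against each $g^m_T$, so Fubini and the interchange with the conditional expectation are legitimate, while Lemma \ref{lem:deter.continuity} guarantees that the resulting function of $y$ is well defined.

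I expect the main obstacle to be the second step: establishing rigorously that on $\{S(W+u)=s\}$ the surviving endpoints coincide with the free Wiener endpoints $w_i(T)+u_i$, $i\in I(s)$, and that this substitution is compatible with the subsequent conditioning, so that Theorem \ref{thm1} applies verbatim. One must check that the bookkeeping relating $\theta_k(u)$ to the pairwise times $\theta_{ij}(u)$ and to the labels $\lambda_{ij}(s)$ entering $\mfe^a_{T,n}$ is consistent with this identification; once the representative-endpoint identity and order preservation on $\{S=s\}$ are in place, the remaining manipulations are routine changes of variables together with the already established integrability estimates.
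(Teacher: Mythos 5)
Your proposal is correct, and its skeleton coincides with the paper's proof: Girsanov together with Lemma \ref{lem1} to pass to the coalescing system $\wt W+u$ weighted by $\cE^a_{T,n}(W,u)$; the identification that on $\{S(W+u)=s\}$ the surviving points are the free Wiener endpoints $w_i(T)+u_i$, $i\in I(s)$ (your argument via $\theta_r(u)=T$ for $r=\min\pi_l$ is exactly what the paper compresses into ``by the definition of $I(s)$''); conditioning on $W(T)$ and invoking Theorem \ref{thm1}; factorizing the Gaussian law of $W(T)$ over the blocks $I(s),L$, $I(s),-L$, $-I(s)$; and Lemma \ref{lem:deter.continuity} to pin down the version of the density. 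Where you genuinely deviate is in how the density is extracted. The paper fixes $y$, takes indicators of shrinking boxes $B^+_\delta(y_i)$ and applies the Lebesgue differentiation theorem; this forces an extra technical step, namely replacing the counts $\left| B^+_\delta(y_i)\cap\mathcal W\right|$ by the indicators $\1\left(B^+_\delta(y_i)\cap\mathcal W\ne\emptyset\right)$, which the paper justifies by the argument of \cite[Appendix B]{2} combined with the uniform integrability coming from Lemma \ref{lem5}. You instead integrate a general bounded test function $f$ supported on $\Delta_j$ and read the density off the resulting integral identity; since the defining relation of $p^{a,n,s,j}_T$ already sums over \emph{distinct} points, no counting correction arises, and Lemma \ref{lem5} is needed only to legitimize Fubini. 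This duality route is slightly cleaner, but it yields the formula only for almost every $y$; so, as you indicate (though a bit vaguely --- ``well defined'' should be ``continuous''), you must still use the continuity of $y\mapsto\E\1(S(\eta^{u,y})=s)\mfe^a_{T,n}(u,y,s)$ from Lemma \ref{lem:deter.continuity}, together with dominated convergence under the bound of Lemma \ref{lem5}, to upgrade the a.e.\ identity to the stated pointwise one --- the same closing step the paper performs after its limit argument.
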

\begin{proof}
Fix $u$ and take $W$ and $\wt W$ as in Section 2. Define
\begin{align*}
B^+_\delta(v)&=[v; v+\delta), \quad v\in\mbR,\  \delta>0, \\
\xi&=\wt{W}(T)+u, \\
\mathcal{W} &= \left\{ \wt{w}_1(T)+u_1, \ldots, \wt{w}_n(T)+u_n \right\}.
\end{align*}

We have, due to the Lebesgue differentiation theorem and \cite[Theorem 7.3.1]{1}, for almost all $y$
\begin{align*}
p^{a,n,s,j}_T(u; y)&=\lim_{\delta\to0+}\delta^{-j}\E
\prod^j_{i=1}\left| B^+_\delta(y_i) \cap \mfX^a_T(u) \right|\cdot\1(S(\vect X^a(u))=s)=
\\
&=\lim_{\delta\to0+}\delta^{-j}\E
\prod^j_{i=1}\left| B^+_\delta(y_i) \cap \mathcal{W} \right|\cdot \cE^a_{T, n}(W,u) \1(S(W+u)=s),
\end{align*}
since by Lemma \ref{lem1} $\vect X(u)\overset{d}{=}\wt{W}+u.$  
The reasoning of \cite[Appendix B]{2}, combined with Lemma \ref{lem5}, allows one to replace $| B^+_\delta(y_i) \cap \mathcal{W}|$ with $\1(B^+_\delta(y_i) \cap \mathcal{W} \ne \emptyset)$ for all $i$ in the previous formula, so that 
\begin{multline*}
p^{a,n,s,j}_T(u; y)= \\ = \lim_{\delta\to0+}\delta^{-j}\E\sum_
{L=\{l_1,\ldots, l_j \}\subset\{1, \ldots,k\}}
\prod^j_{i=1}
\1_{B^+_\delta(y_i)}\left(\xi_{l_i}\right) \cE^a_{T,n}(W,u) \1(S(W+u)=s).
\end{multline*}
%\lim_{\delta\to0+}\delta^{-j}\E\sum_{
%\begin{subarray}{c}L=\{l_1,\ldots, l_j\}\subset\\
%\{1, \ldots, k\}\end{subarray}}
%\prod^j_{i=1}\1_{B^+_\delta(y_i)}\left(X^a(u^{I(s), L}_i, T)\right)\1(S(X^a,u)=s)=
Consider a separate term 
\[
A_{\delta,L}=\E\prod^j_{i=1}\1_{B^+_\delta(y_i)}\left(\xi_{l_i}\right)\cE^a_{T,n}(W,u) \1(S(W+u)=s).
\]
Since on the set $\{S(W+u)=s\}$ by the definition of $I(s)$
\[
\mathcal{W}=\{w_i(T)+u_i\mid i\in I(s)\},
\]
Theorem \ref{thm1} implies that
\begin{align*}
A_{\delta,L}&=\E\E\left(\prod^j_{i=1}\1_{B^+_\delta(y_i)}\left(\xi_{l_i}\right)\1(S(W+u)=s)\cE^a_{T,n}(W,u)/W(T)\right)=
\\
&=\E\prod^j_{i=1}\1_{B^+_\delta(y_i)}\left((W(T)+u)^{I(s),L}_i\right)  \left(\E\1(S(\eta^{u,z})=s)\mfe^a_{T,n}(u,z, s)\right)\Big|_{z=W(T)+u}.
\end{align*}
The vectors $W(T)^{I(s),L}, W(T)^{I(s), -L}$ and $W(T)^{-I(s)}$ being independent given fixed $s,$ the local property of conditional expectation \cite[Lemma 6.2]{7} implies that
\begin{align*}
p^{a,n,s,j}_T(u; y)&=\lim_{\delta\to0+}
\delta^{-j}
\E\sum_{L=\{l_1,\ldots, l_j\}\subset
\{1, \ldots, k\}}
\prod^j_{i=1}\1_{B^+_\delta(y_i)}\left((W(T)+u)^{I(s),L}_i\right)\times \\
& \phantom{aaaa} \times \left(\E\1(S(\eta^{u,z})=s)\mfe^a_{T,n}(u,z, s)\right)\Bigg|_{
\begin{subarray}{l}
z^{I(s),L}=(W(T)+u)^{I(s),L},\\
z^{I(s), -L}=\alpha,\\
z^{-I(s)}=\beta,
\end{subarray}}
\end{align*}
where  $W(T)^{I(s),L}$ and the Gaussian random variables $\alpha\sim\cN(u^{I(s),-L}, T\cdot \mathrm{Id}_{(k-j)\times(k-j)}),$ 
$\beta\sim\cN(u^{-I(s)}, T \mathrm{Id}_{(n-k)\times(n-k)})$
%$$
%\alpha\sim\cN(u^{I(s),-L}, T\cdot \mathrm{Id}_{(k-j)\times(k-j)}), \ \ \beta\sim\cN(u^{-I(s)}, T \mathrm{Id}_{(n-k)\times(n-k)}
%$$
are jointly independent. With Lemma \ref{lem:deter.continuity}, the rest of the proof follows by standard reasoning. We omit the details.
\end{proof}

%\begin{remk}
%\label{remk1}
%The expression for the density $p^{a,s,j}_T(u; \cdot)$ is given in Theorem \ref{thm2} only for $y\in\Delta_j,$ however it can be extended onto the whole $\mbR^j$ by symmetry.
%\end{remk}
 Given an Arratia flow $X^a,$ a starting point $u\in\Delta_n$ and $k\in\{1, \ldots, n\}$ the corresponding $(n,k)$-point density is a measurable function $p^{a,n,k}_T(u; \cdot)$ on $\mbR^k$ such that for any bounded nonnegative measurable $f: \mbR^k\to\mbR$
 \begin{equation}
 \label{eq11}
 \E\sum_{
\begin{subarray}{c}v_1, \ldots, v_{k}\in \mfX^a_t(u),\\
v_1, \ldots, v_{k} \ \mbox{\small are distinct}
\end{subarray}
}
f(v_1, \ldots, v_{k})\1\left(|\mfX^a_t(u)|\geq k\right)=\int_{\mbR^{k}}p^{a,n,k}_T(u; y)f(y)dy.
 \end{equation}

The next consequence of the formula of total probability  gives a relation between $p^{a,n,k}_T$ and $p^{a,n,s,k}_T.$

\begin{lem}
\label{lem7}
 For any $n\in\mbN, u\in\Delta_n$ and $k\in\{1,\ldots, n\}$ a.e.
\[
p^{a,n,k}_T(u; \cdot)=\sum^{n-k}_{l=0}\sum_{s\in Sh_{n,l}}p^{a,n,s,k}_T(u; \cdot).
\]
\end{lem}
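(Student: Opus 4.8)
The plan is to decompose the defining event $\{|\mfX^a_T(u)|\ge k\}$ in \eqref{eq11} according to the value of the coalescence scheme $S(\vect X^a(u))$ and then match the two sides of the claimed identity against an arbitrary test function. The statement is, as indicated, a bookkeeping consequence of the formula of total probability once this decomposition is made precise.

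First I would record the decisive observation that on $\{S(\vect X^a(u))=s\}$ with $s\in Sh_{n,l}$ the family $\mfX^a_T(u)$ consists of exactly $n-l$ distinct points: by the construction preceding Theorem \ref{thm2} the set $I(s)$ has $n-l$ elements and $|\{X^a(u_i,T)\mid i\in I(s)\}|=n-l$ on that event. Since $S(\vect X^a(u))$ takes values in the pairwise disjoint sets $Sh_{n,l}$, $l=\ov{0,n-1}$ (with the convention $Sh_{n,0}=\{\varnothing\}$ for the collision-free scheme), this yields the disjoint decomposition
\[
\{|\mfX^a_T(u)|\ge k\}=\bigsqcup_{l=0}^{n-k}\bigsqcup_{s\in Sh_{n,l}}\{S(\vect X^a(u))=s\},
\]
and hence the pointwise identity between nonnegative random variables
\[
\1(|\mfX^a_T(u)|\ge k)=\sum_{l=0}^{n-k}\sum_{s\in Sh_{n,l}}\1(S(\vect X^a(u))=s).
\]

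Next I would fix an arbitrary bounded nonnegative measurable $f\colon\mbR^k\to\mbR$, insert the last identity into \eqref{eq11}, and interchange the finite double sum over $l$ and $s$ with the expectation, which is legitimate because all summands are nonnegative. For $s\in Sh_{n,l}$ with $l\le n-k$ one has $k\le n-l$, so each density $p^{a,n,s,k}_T(u;\cdot)$ is well defined by the preceding lemma, and its defining property turns the corresponding summand into $\int_{\mbR^k}p^{a,n,s,k}_T(u;y)f(y)\,dy$. Collecting the terms gives
\[
\int_{\mbR^k}p^{a,n,k}_T(u;y)f(y)\,dy=\int_{\mbR^k}\Bigg(\sum_{l=0}^{n-k}\sum_{s\in Sh_{n,l}}p^{a,n,s,k}_T(u;y)\Bigg)f(y)\,dy,
\]
and since $f$ is arbitrary the two integrands agree for almost every $y$, which is the assertion.

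The argument needs no probabilistic input beyond Lemma \ref{lem3} (which guarantees that the scheme is almost surely well defined); the only point requiring care is the matching of index ranges, namely that the constraint $l\le n-k$ in the outer sum is exactly equivalent to $k\le n-l$, so that every density $p^{a,n,s,k}_T$ occurring on the right-hand side lies in the range for which existence was established. This equivalence, together with the disjointness and exhaustiveness of the events $\{S(\vect X^a(u))=s\}$ over $l\le n-k$, is precisely what the construction of the coalescence scheme provides, so I expect no genuine obstacle beyond verifying these combinatorial bounds.
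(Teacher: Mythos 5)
Your proof is correct and is exactly the argument the paper intends: the paper states Lemma \ref{lem7} without proof, calling it a ``consequence of the formula of total probability,'' which is precisely your decomposition of $\1(|\mfX^a_T(u)|\ge k)$ over the a.s.\ disjoint and exhaustive events $\{S(\vect X^a(u))=s\}$, $s\in Sh_{n,l}$, $l\le n-k$, followed by testing against an arbitrary bounded nonnegative $f$. The index-range check $l\le n-k\iff k\le n-l$ and the appeal to the preceding existence lemma are the right points of care, and nothing further is needed.
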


The $k-$point density $p^{a,k}_T(\cdot)$ (cf \cite{2, 3}) is defined as a measurable function on $\mbR^k$ such that the analog of \eqref{eq11} holds with $\mfX^a_T(u)$ replaced with the set $\{X^a(v,T)\mid v\in[0; 1]\}$ and the condition $|\mfX^a_t(u)|\geq k$ dropped. The result of \cite[Theorem 3.2]{DorVovApproximations} admits the following extension, the proof being the same with some minor changes having been made.

\begin{thm}
\label{thm3}
Let $u^{(n)}=(u^{(n)}_1, \ldots, u^{(n)}_n)\in\Delta_n, n\in\mbN,$ be such that $u^{(n)}_1=0, u^{(n)}_n=1, n\in\mbN,$
\[
\big\{u^{(n)}_1, \ldots, u^{(n)}_n\big\}\subset \Big\{u^{(n + 1)}_1, \ldots, u^{(n + 1)}_{n + 1}\Big\}, 
\quad n\in\mbN,
\]
 and
\[
\max_{j=\ov{0, n-1}}\left(u^{(n)}_{j+1}-u^{(n)}_j\right)
{\mathop{\longrightarrow}\limits_{n\to\infty}} 0.
\]
Then for all $k\in\mbN$ a.e.
\[
p^{a,n,k}_T\left(u^{(n)}; \cdot\right)\nearrow p^{a,k}_T, \ n\to\infty.
\]
\end{thm}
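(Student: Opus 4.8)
The plan is to read off the convergence directly from the defining relation \eqref{eq11} together with the monotone convergence theorem, so that the drift enters only through the existence of the densities (supplied by the preceding lemmas via the Girsanov theorem of Section~\ref{section2}) and not through the limit passage itself. Fix $k\in\mbN$ and a bounded nonnegative measurable $f\colon\mbR^k\to\mbR$, and set
\[
Z_n=\sum_{\begin{subarray}{c}v_1,\ldots,v_k\in\mfX^a_T(u^{(n)})\\ v_1,\ldots,v_k\ \text{distinct}\end{subarray}} f(v_1,\ldots,v_k),
\qquad
Z_\infty=\sum_{\begin{subarray}{c}v_1,\ldots,v_k\in\{X^a(v,T)\mid v\in[0;1]\}\\ v_1,\ldots,v_k\ \text{distinct}\end{subarray}} f(v_1,\ldots,v_k).
\]
Since the sum over distinct $k$-tuples vanishes whenever fewer than $k$ distinct values are present, the indicator $\1(|\mfX^a_T(u^{(n)})|\ge k)$ in \eqref{eq11} is redundant, and $\E Z_n=\int_{\mbR^k}p^{a,n,k}_T(u^{(n)};y)f(y)\,dy$, while $\E Z_\infty=\int_{\mbR^k}p^{a,k}_T(y)f(y)\,dy$ by the definition of $p^{a,k}_T$. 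The core of the argument is the almost sure monotone convergence $Z_n\nearrow Z_\infty$.

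This reduces to the almost sure set convergence
\[
\mfX^a_T(u^{(n)})\nearrow\{X^a(v,T)\mid v\in[0;1]\},\qquad n\to\infty .
\]
Here I would invoke three structural properties of the Arratia flow with bounded Lipschitz drift, all inherited from the driftless case: the map $v\mapsto X^a(v,T)$ is nondecreasing (order preservation of the coalescing flow); its range is almost surely a finite set; and the preimage of each attained value is a closed subinterval of $[0;1]$ of positive length. The nestedness hypothesis $\{u^{(n)}_i\}\subset\{u^{(n+1)}_i\}$ gives $\mfX^a_T(u^{(n)})\subseteq\mfX^a_T(u^{(n+1)})$, whence $Z_n\le Z_{n+1}$ because $f\ge0$ and the summation ranges over a larger set. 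For the set convergence, fix $\omega$ outside the exceptional null set, let $C_1,\ldots,C_M$ be the finitely many clusters and $\ell=\min_i|C_i|>0$; as soon as the mesh $\max_j(u^{(n)}_{j+1}-u^{(n)}_j)$ drops below $\ell$, every cluster contains a partition point, so every attained value lies in $\mfX^a_T(u^{(n)})$ and the inclusion becomes an equality. This is the step I expect to require the most care: one must argue that the (random) clusters have uniformly positive minimal length and combine this with the mesh hypothesis, the normalization $u^{(n)}_1=0,\ u^{(n)}_n=1$ serving to capture the extreme values.

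Granting $Z_n\nearrow Z_\infty$, the monotone convergence theorem yields
\[
\int_{\mbR^k}p^{a,n,k}_T(u^{(n)};y)f(y)\,dy=\E Z_n\ \nearrow\ \E Z_\infty=\int_{\mbR^k}p^{a,k}_T(y)f(y)\,dy
\]
for every bounded nonnegative $f$. Taking $f=\1_A$ over Borel $A\subset\mbR^k$ shows that $\int_A p^{a,n,k}_T(u^{(n)};\cdot)$ is nondecreasing in $n$ for every $A$. Choosing $A=\{p^{a,n+1,k}_T(u^{(n+1)};\cdot)<p^{a,n,k}_T(u^{(n)};\cdot)\}$ makes the integrand of $\int_A\big(p^{a,n,k}_T(u^{(n)};\cdot)-p^{a,n+1,k}_T(u^{(n+1)};\cdot)\big)$ strictly positive on $A$ yet the integral nonpositive, forcing $A$ to be Lebesgue-null; hence the densities increase a.e. Their a.e. pointwise limit $g=\sup_n p^{a,n,k}_T(u^{(n)};\cdot)$ then satisfies $\int_A g=\lim_n\int_A p^{a,n,k}_T(u^{(n)};\cdot)=\int_A p^{a,k}_T$ for all $A$ by monotone convergence, so $g=p^{a,k}_T$ a.e. and the asserted $p^{a,n,k}_T(u^{(n)};\cdot)\nearrow p^{a,k}_T$ follows.

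Relative to the driftless statement \cite[Theorem 3.2]{DorVovApproximations}, the only modifications are the use of the drift version of the density existence established above and the remark that coalescence, monotonicity, and the positive length of clusters persist for bounded Lipschitz drift; the monotone convergence mechanism itself is unchanged.
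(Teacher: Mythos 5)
Your proof is correct and follows essentially the route the paper takes: the paper's entire proof is the remark that the driftless statement \cite[Theorem 3.2]{DorVovApproximations} extends ``with minor changes,'' and your argument --- monotone growth of the point sets $\mfX^a_T(u^{(n)})$ under the nested partitions, exhaustion of the finitely many clusters once the mesh falls below their minimal length, the monotone convergence theorem, and the identification of the a.e.\ increasing densities --- is exactly that extension spelled out, with the drift handled (as intended) by transferring the a.s.\ structural properties through the absolute continuity given by the Girsanov theorem \cite[Theorem 7.3.1]{1}. The positive-length-of-clusters property you flag as delicate is indeed the load-bearing fact, but it is the standard one underlying the cited driftless proof: it follows from the a.s.\ finiteness of the range of $v\mapsto X(v,T)$ combined with monotonicity and the c\`adl\`ag dependence on the starting point, and these null-set statements pass to the drifted flow by absolute continuity, so there is no gap.
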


Due to Lemma \ref{lem7}, Theorem \ref{thm2} provides an explicit expression for the densities $p^{a,k}_T(u;\cdot)$ in terms of conditional expectations of certain stochastic exponentials discussed in Section \ref{section1}.

Consider $u\in\Delta_n.$ Suppose that elements of the set $\mfX_T(u)$ are listed in ascending order. Let $\vk$ be the cemetery state. Given a set  $L=\{l_1, \ldots, l_k\}, l_i\in\mbN, i=\ov{1,k},$ for some $k,$ put the random vector $\mcX^L_T(u)$ to be equal to
\begin{equation}
\label{eq12}
\left((\mfX_T(u))_{l_1,\ldots,}(\mfX_T(u))_{l_k}\right),
\end{equation}
if $\max_{i=\ov{1,k}}l_i\leq|\mfX_T(u)|,$
and $\vk,$ otherwise. We denote the density of $\mcX^L_T(u)$ in $\mbR^k$ by $q^L_T(u;\cdot).$ Note that always
\[
\int_{\mbR^k}q^L_T(u;y)dy<1.
\]

\begin{thm}[cf. \cite{kon.marx}]
\label{thm4}
For any $u\in\Delta_n$ and any $k\in\{1, \ldots, n\}$ a.e.
\[
p^{a,n,k}_T(u;y)=\sum_{
L=\{l_1,\ldots, l_k\},\ 
l_i\in\mbN,  \ i=\ov{1,k}}
q^L_T(u; y)\E\left(\wt\cE^a_{T,n}(u)/\mcX^L_T(u)=y\right).
\]
\end{thm}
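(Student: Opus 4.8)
The plan is to verify that the claimed right-hand side satisfies the defining relation \eqref{eq11} for the $(n,k)$-point density, i.e.\ to pair both expressions with an arbitrary bounded nonnegative measurable $f\colon\mbR^k\to\mbR$ and to check the equality of the resulting integrals, after which the arbitrariness of $f$ yields the a.e.\ identity. Concretely, I would establish
\[
\E\sum_{\substack{v_1, \ldots, v_{k}\in \mfX^a_T(u)\\ v_1, \ldots, v_{k}\ \text{distinct}}} f(v_1, \ldots, v_{k})\1\left(|\mfX^a_T(u)|\geq k\right) = \sum_{L} \int_{\mbR^k} q^L_T(u;y)\,\E\left(\wt\cE^a_{T,n}(u)/\mcX^L_T(u)=y\right) f(y)\,dy,
\]
with $L$ ranging over the index collections appearing in the statement.

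First I would rewrite the left-hand side combinatorially. Enumerating the finite set $\mfX^a_T(u)$ in ascending order, each distinct $k$-tuple of its points corresponds to a collection $L=(l_1,\ldots,l_k)$ of positions with $\max_i l_i\le|\mfX^a_T(u)|$, so that the summand is precisely $f$ evaluated at the drift-$a$ selection indexed by $L$; the constraint $|\mfX^a_T(u)|\ge k$ is absorbed into the requirement that this selection is not the cemetery state $\vk$. This turns the left-hand side into $\sum_L\E\,f(\text{selection}_L)\,\1(\text{selection}_L\ne\vk)$. Next I would apply the Girsanov theorem for the Arratia flow \cite[Theorem 7.3.1]{1}: since each such selection is a functional of $\vect X^a(u)$ and $\wt\cE^a_{T,n}(u)$ is the Radon--Nikodym density of $\Law(\vect X^a(u))$ with respect to $\Law(\vect X(u))$ in $(C([0;T]))^n$ (recalled at the beginning of Section \ref{section2}), each term becomes an expectation under the zero-drift flow, $\E\,f(\mcX^L_T(u))\,\wt\cE^a_{T,n}(u)\,\1(\mcX^L_T(u)\ne\vk)$. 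The change of measure is legitimate because $\wt\cE^a_{T,n}(u)$ is a probability density and $f$ is bounded, so every expectation is finite; the quantitative bounds of Lemma \ref{lem5} and the uniform integrability recorded in Lemma \ref{lem8} provide additional control if one wishes to pass to limits.

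Finally I would disintegrate $\wt\cE^a_{T,n}(u)$ over the terminal selection $\mcX^L_T(u)$. Since $q^L_T(u;\cdot)$ is exactly the density of $\mcX^L_T(u)$ on $\mbR^k$ and the indicator $\1(\mcX^L_T(u)\ne\vk)$ restricts to the absolutely continuous part, the elementary identity for conditional expectations gives
\[
\E\,f(\mcX^L_T(u))\,\wt\cE^a_{T,n}(u)\,\1(\mcX^L_T(u)\ne\vk)=\int_{\mbR^k}q^L_T(u;y)\,\E\left(\wt\cE^a_{T,n}(u)/\mcX^L_T(u)=y\right) f(y)\,dy,
\]
which is the term contributed to the right-hand side. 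Summing over $L$ completes the verification.

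The main obstacle I anticipate is not analytic but organizational: the bookkeeping that matches each ordered index collection $L$ with the correct orientation of the point selection, so that the decomposition of the sum over distinct $k$-tuples is consistent with the support of $q^L_T$ (which lives on the image of the ascending enumeration), and the accompanying confirmation that the assembled representation is symmetric and holds a.e.\ on $\mbR^k$ rather than merely on $\Delta_k$. The measure-theoretic care then lies in the treatment of the cemetery state $\vk$ and in checking that conditioning $\wt\cE^a_{T,n}(u)$ on the finitely many terminal coordinates $\mcX^L_T(u)$ selects a well-defined version of the conditional expectation for a.e.\ $y$; the change of measure and the disintegration themselves are routine once the boundedness of $f$ and the integrability of $\wt\cE^a_{T,n}(u)$ are invoked.
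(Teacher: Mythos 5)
Your proposal is correct, and it shares the core ingredients of the paper's argument --- the decomposition of the sum over distinct $k$-tuples into a sum over index selections $L$ of the ascending enumeration (with the indicator $\1(|\mfX^a_T(u)|\ge k)$ absorbed into $\1(\mcX^L_T(u)\ne\vk)$), the Girsanov change of measure replacing the drift-$a$ flow by the zero-drift flow weighted by $\wt\cE^a_{T,n}(u)$, and the conditioning on the terminal selection $\mcX^L_T(u)$ --- but the mechanism by which you extract the density identity is genuinely different. The paper, proceeding ``similarly to the proof of Theorem~\ref{thm2},'' represents $p^{a,n,k}_T(u;y)$ as a limit $\lim_{\delta\to0+}\delta^{-k}$ of expectations of products of indicators of small boxes $B^+_\delta(y_i)$, applies the tower property inside this limit, and then invokes the Lebesgue differentiation theorem a second time to identify the limit with $q^L_T(u;y)\E(\wt\cE^a_{T,n}(u)/\mcX^L_T(u)=y)$ at a.e.\ $y$. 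You instead verify the defining relation \eqref{eq11} directly: you pair both sides with an arbitrary bounded nonnegative measurable $f$, use the disintegration identity for $\E\,f(\mcX^L_T(u))\1(\mcX^L_T(u)\ne\vk)\wt\cE^a_{T,n}(u)$, and conclude by the arbitrariness of $f$. Your route is more elementary in that it avoids any $\delta\to0$ limit and the attendant interchange of limit, expectation and (effectively finite, since only $L$ with $\max_i l_i\le n$ contribute) sum over $L$; it also establishes existence of $p^{a,n,k}_T(u;\cdot)$ as a by-product rather than presupposing it via Lemma~\ref{lem7}. What the paper's route buys is uniformity of method with Theorem~\ref{thm2}, where the more intricate structure of the $(n,s,j)$-densities makes the box-limit approach the natural one, and a pointwise (Lebesgue-point) reading of the formula. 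The bookkeeping worry you raise --- matching ordered tuples of distinct values with ordered index collections so that the result is symmetric on all of $\mbR^k$ --- is real but affects the paper's formulation equally, since the definition \eqref{eq12} of $\mcX^L_T(u)$ already fixes an enumeration of $L$; it is a notational, not a mathematical, gap.
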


\begin{proof}
Proceeding similarly to the proof of Theorem \ref{thm2}, one obtains a.e.
\begin{align*}
p^{a,n,k}_T(u;y)&=
\lim_{\delta\to0+}\delta^{-k}
\sum_{
\begin{subarray}{c}
L=\{l_1,\ldots, l_k\},\\
l_i\in\mbN, \ i=\ov{1,k}
\end{subarray}
}
\E\wt\cE^a_{T}(u)\1\left(\mcX^L_T(u)\ne\vk\right)\prod^k_{i=1}\1_{B^+_\delta(y_i)}\left((\mcX^L_T(u))_i\right)= \\
&=
\lim_{\delta\to0+}\delta^{-k}
\sum_{
\begin{subarray}{c}
L=\{l_1,\ldots, l_k\},\\
l_i\in\mbN, \ i=\ov{1,k}
\end{subarray}
}
\E\1\left(\mcX^L_T(u)\ne\vk\right)\times\1\left(\mcX^L_T(u)\in\mathop{\times}\limits^k_{i=1}B^+_\delta(y_i)\right)\times \\
& \phantom{aaaa} \times \E\left(\wt\cE^a_T(u)/\mcX^L_T(u)\right).
\end{align*}
%As in Theorem \ref{thm2} and Lemma \ref{lem:deter.continuity} one shows that, for fixed $L,$
%$$
%\E\left(\wt\cE^a_T(u)/\mcX^L_T(u)=y_m\right)\rightarrow\E\left(\wt\cE^a_T(u)/\mcX^L_T(u)=y\right),
%$$
%when $y_m\to y, m\to\infty.$ Thus 
The application of the Lebesgue differentiation theorem finishes the proof. %We  skip the details.
\end{proof}
Replacing in \eqref{eq12} the set $\mfX_T(u)$ with the set $\{X(v,T)\mid v\in[0; 1]\}$ one defines, analogously to $\mcX^L_T(u),$ the random vector $\mcX^L_T$ with values in $\mbR^k\cup\{\vk\}.$ The corresponding density being denoted by $q^L_T(\cdot),$ the following result holds.
\begin{thm}
\label{thm5}
For any $k\in\mbN$ a.e.
$$
p^{a,k}_T(y)=
\sum_{
L=\{l_1,\ldots, l_k\}, \ l_i\in\mbN, \ i=\ov{1,k}}
q^L_T(y)\E\left(\wt\cE^a_T/\mcX^L_T=y\right).
$$
\end{thm}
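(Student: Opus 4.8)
The plan is to deduce Theorem~\ref{thm5} from its finite-dimensional counterpart, Theorem~\ref{thm4}, by passing to the limit along a refining sequence of partitions, working throughout in the integrated (weak) form against test functions so as to avoid any pointwise control of the densities. I would fix $k\in\mbN$, a bounded nonnegative measurable $f\colon\mbR^k\to\mbR$, and a sequence $u^{(n)}\in\Delta_n$ as in Theorem~\ref{thm3}. By the defining relation for $p^{a,k}_T$ (the analogue of \eqref{eq11} for the full flow) it suffices to prove, for every such $f$, that
$$\int_{\mbR^k}p^{a,k}_T(y)f(y)\,dy=\sum_{L=\{l_1,\ldots,l_k\},\,l_i\in\mbN}\int_{\mbR^k}q^L_T(y)\E\left(\wt\cE^a_T/\mcX^L_T=y\right)f(y)\,dy,$$
since two locally integrable functions of $y$ with equal integrals against all such $f$ agree almost everywhere.

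First I would record two elementary reformulations. Writing $\Sigma^\infty f$ for the sum of $f(v_1,\ldots,v_k)$ over all tuples of distinct points $v_1,\ldots,v_k$ of the range $\{X(v,T)\mid v\in[0;1]\}$, and $\Sigma^n f$ for the analogous sum over $\mfX_T(u^{(n)})$, one has $\Sigma^n f=\sum_L f(\mcX^L_T(u^{(n)}))\1(\mcX^L_T(u^{(n)})\ne\vk)$ and $\Sigma^\infty f=\sum_L f(\mcX^L_T)\1(\mcX^L_T\ne\vk)$. Because the partitions are nested and their mesh tends to $0$, the finite sets $\mfX_T(u^{(n)})$ increase and, almost surely, eventually exhaust the (a.s.\ finite) range of the flow at time $T$; hence for $f\ge0$ one has $\Sigma^n f\nearrow\Sigma^\infty f$ a.s., and each $\Sigma^n f$ is $\cG_n$-measurable, where $\cG_n=\sigma(X(u_1,\cdot),\ldots,X(u_n,\cdot))$.

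The core computation then chains Theorem~\ref{thm4}, Lemma~\ref{lem8} and Theorem~\ref{thm3}. Integrating the identity of Theorem~\ref{thm4} against $f$ and rewriting each product of density and conditional expectation as a joint expectation gives
$$\int_{\mbR^k}p^{a,n,k}_T(u^{(n)};y)f(y)\,dy=\sum_L\E\left[\wt\cE^a_{T,n}(u^{(n)})f(\mcX^L_T(u^{(n)}))\1(\mcX^L_T(u^{(n)})\ne\vk)\right]=\E\left[\wt\cE^a_{T,n}(u^{(n)})\,\Sigma^n f\right].$$
Since $\Sigma^n f$ is $\cG_n$-measurable, Lemma~\ref{lem8} gives $\wt\cE^a_{T,n}(u^{(n)})=\E(\wt\cE^a_T/\cG_n)$, so the tower property yields $\E[\wt\cE^a_{T,n}(u^{(n)})\Sigma^n f]=\E[\wt\cE^a_T\,\Sigma^n f]$; monotone convergence ($\Sigma^n f\nearrow\Sigma^\infty f$, $\wt\cE^a_T\ge0$) then gives $\E[\wt\cE^a_T\Sigma^n f]\nearrow\E[\wt\cE^a_T\Sigma^\infty f]$. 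On the other hand Theorem~\ref{thm3}, integrated against $f\ge0$, gives $\int p^{a,n,k}_T(u^{(n)};y)f(y)\,dy\nearrow\int p^{a,k}_T(y)f(y)\,dy$. Comparing the two limits produces $\int p^{a,k}_T f=\E[\wt\cE^a_T\Sigma^\infty f]$, and expanding the right-hand side by Tonelli, the tower property and the definition of $q^L_T$ as the density of $\mcX^L_T$ recovers exactly the sum over $L$ in the displayed identity, completing the argument.

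The main obstacle is the legitimacy of interchanging the limit in $n$ with the infinite sum over $L$ and with the expectation, all of which involve the varying conditioning variable $\mcX^L_T(u^{(n)})$ and the varying exponential $\wt\cE^a_{T,n}(u^{(n)})$. I expect this to be handled cleanly, and without recourse to uniform-integrability estimates of the type in Lemma~\ref{lem5}, precisely through the tower identity $\E[\wt\cE^a_{T,n}(u^{(n)})\Sigma^n f]=\E[\wt\cE^a_T\Sigma^n f]$, which removes the $n$-dependence from the exponential exactly and reduces the passage to the limit to a single application of monotone convergence against the fixed nonnegative weight $\wt\cE^a_T$. The only quantitative input required is $\int p^{a,k}_T f<+\infty$ for $f$ of compact support, i.e.\ the local finiteness of $p^{a,k}_T$, which guarantees $\Sigma^\infty f\in L_1(\wt\cE^a_T\,\Prob)$ and hence the finiteness of all quantities above.
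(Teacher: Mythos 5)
Your argument is correct, but it follows a genuinely different route from the paper's. The proof the paper has in mind (and omits as ``repetitive'') is a direct rerun of the proof of Theorem \ref{thm4}: replace $\mfX_T(u)$ by the full range $\{X(v,T)\mid v\in[0;1]\}$ and $\wt\cE^a_{T,n}(u)$ by $\wt\cE^a_T$, invoke the Girsanov theorem for the whole flow in $D([0;1],C([0;T]))$ (\cite[Theorem 7.3.1]{1}), and repeat the $\delta\to0+$ Lebesgue differentiation computation. You instead work in integrated form and obtain Theorem \ref{thm5} as a limit of Theorem \ref{thm4} along the nested partitions of Theorem \ref{thm3}, using Lemma \ref{lem8} to write $\wt\cE^a_{T,n}(u^{(n)})=\E(\wt\cE^a_T/\cG_n)$ and the tower property to remove the $n$-dependence from the exponential, so that the limit is a single application of monotone convergence against the fixed nonnegative weight $\wt\cE^a_T$, followed by Tonelli and disintegration. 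This is a clean alternative: it never redoes the differentiation argument and uses the infinite-dimensional density $\wt\cE^a_T$ only through its conditional-expectation property (Lemma \ref{lem8}), at the price of relying on Theorem \ref{thm3}, which the paper itself only proves by reference.

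Two places in your write-up need shoring up, though both are repairable. First, the claim that $\mfX_T(u^{(n)})$ a.s.\ eventually exhausts the range of $X(\cdot,T)$ does not follow merely from ``nested partitions with mesh tending to $0$'': you need that the range is a.s.\ finite and that every level set of the nondecreasing, right-continuous step function $v\mapsto X(v,T)$ contains an interval of positive length. Right-continuity plus finiteness of the range yields this for every level set except possibly the one containing $v=1$, which is harmless precisely because $1=u^{(n)}_n$ is a partition point for all $n$; without some such argument the monotone convergence $\Sigma^n f\nearrow\Sigma^\infty f$, on which your whole comparison rests, is unjustified. Second, Lemma \ref{lem8} is stated for initial segments of a fixed enumeration of a dense set, while your $u^{(n)}$ form a triangular array listed in increasing order; to apply the lemma you should note that $\wt\cE^a_{T,n}$ is a.s.\ invariant under permutations of the coordinates (it is the Radon--Nikodym density of the law of the $n$-point motion, hence intrinsic to the unordered family), so the ordering discrepancy is immaterial. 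Finally, your closing remark that local finiteness of $p^{a,k}_T$ is ``required'' is overly cautious: equality of the integrals of two nonnegative measurable functions against all bounded nonnegative $f$ already forces a.e.\ equality by a standard truncation, so no quantitative input is needed at that step.
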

The arguments %, which are based on the locality of conditional expectation, 
 are repetitive and thus omitted.
%\printbibliography[title={References}]

\end{document}